\documentclass[12pt]{amsart}
\usepackage{amssymb,latexsym, amsmath, amscd, array, graphicx}

\swapnumbers \numberwithin{equation}{section}

\theoremstyle{plain}

\newtheorem{thm}{Theorem}[section]

\newtheorem{lem}[thm]{Lemma}
\newtheorem{lemma}[thm]{Lemma}

\newtheorem{prop}[thm]{Proposition}
\newtheorem{cor}[thm]{Corollary}

\theoremstyle{definition}
\newtheorem{defin}[thm]{Definition}

\newtheorem{remark}[thm]{Remark}

\newtheorem{ex}[thm]{Example}

\newtheorem{question}[thm]{Question}

%% Useful operator names

\DeclareMathOperator{\cat}{{\mbox{\rm cat$_{\rm LS}$}}}

\DeclareMathOperator{\cuplength}{{\rm cup-length}}

\DeclareMathOperator{\Ord}{{\rm Ord}}

\def\cat{\protect\operatorname{cat}}

%% Greek letters

%% Bbb letters

\def\Z{{\mathbb Z}}

\def\1{\hbox{\rm\rlap {1}\hskip.03in{\rom I}}}
\def\Bbbone{{\rm1\mathchoice{\kern-0.25em}{\kern-0.25em}
{\kern-0.2em}{\kern-0.2em}I}}

%% Other stuff

\long\def\forget#1\forgotten{} %

\newcommand\ver[1]{\marginpar{\tiny Changed in Ver \VER}}

\date{\today}
\begin{document}

\title[On LS-category of Peano continua]{On the Lusternik-Schnirelmann category of Peano continua}

\author[Tulsi Srinivasan]{Tulsi Srinivasan} %

\address{Tulsi Srinivasan, Department of Mathematics, University
of Florida, 358 Little Hall, Gainesville, FL 32611-8105, USA}
\email{tsrinivasan@ufl.edu}

\subjclass[2000]
{Primary 55M30; %LS
%Secondary 53C23,  %% Global topological methods (\`a la Gromov)
Secondary 57N65, 54F45}  %% Algebraic topology of manifolds

\begin{abstract}
We define the LS-category $\cat_g$ by means of covers of a space by
general subsets, and show that this definition coincides with the classical
Lusternik-Schnirelmann category for compact metric ANR spaces. We apply this result
to give short dimension theoretic proofs of the Grossman-Whitehead theorem and
Dranishnikov's theorem. We compute $\cat_g$ for some fractal
Peano continua such as Menger spaces and Pontryagin surfaces.
\end{abstract}

\maketitle

\section{Introduction}

We recall that the {\em Lusternik-Schnirelmann category} $\cat X$ of
a topological space $X$ is the smallest integer $k$ such that $X =
\bigcup_{i=0}^{k}A_{i}$, where each $A_{i}$ is an open set
contractible in $X$. A set $A\subset X$ is said to be {\em
contractible in $X$} if its inclusion map $A\to X$ is homotopic to
the constant map. It is known that for absolute neighborhood
retracts (ANR spaces) the sets $A_i$ can be taken to be closed. In
this paper we investigate what happens if the $A_i$ are arbitrary subsets.
Thus, we consider the following:

\begin{defin} \label{def} For any space $X$, define
{\em the general LS-category} $\cat_{g}X$ of $X$ to
be the smallest integer $k$ such that $X = \bigcup_{i=0}^{k}A_{i}$,
where each $A_{i}$ is contractible in $X$.
\end{defin}

We apply this definition to Peano continua, i.e., path connected and
locally path connected compact metric spaces. On these spaces, $\cat_g$
has the same upper and lower bounds as $\cat$, i.e., $$
\cuplength(X)\le\cat_gX\le\dim X.
$$

Moreover, the Grossman-Whitehead theorem (Corollary ~\ref{Whitehead}) and Dranishnikov's theorem (Corollary ~\ref{Dranish}) can be proven
for $\cat_g$, using dimension theoretic arguments.
This allows us to compute $\cat_g$ for fractal spaces
such as the Sierpinski carpet, Menger spaces and
Pontryagin surfaces.

We show that $\cat_g X=\cat X$ for compact metric ANRs, which yields new short proofs of 
the Grossman-Whitehead theorem and Dranishnikov's theorem.

\section{Acknowledgement}
I am very grateful to my adviser Alexander Dranishnikov for
formulating this problem, and for all his ideas and advice.

\section{LS-category for general spaces}

The following result is known.

\begin{prop}\label{nerve}
Let $X$ be a metric space, $A$ a subset of $X$ and
$\mathcal V' = \{V'_i\}_{i\in I}$ a cover of $A$ by sets open in $A$. Then
$\mathcal V'$ can be extended to a cover $\mathcal V = \{V_i\}_{i\in I}$ of
$A$ by sets open in $X$ with the same nerve and such that $V_i\cap A=V_i'$ for all $i\in I$.

\begin{proof} Let $$V_i = \bigcup_{a \in V'_i}B(a, d(a, A - V'_i)/2),$$
and let $\mathcal V = \{V_i\}$. Clearly $\mathcal V$ is an extension
of $\mathcal V'$ and  $V_i\cap A=V_i'$. We claim that $V_{i_1}\cap\dots\cap V_{i_k}\ne\emptyset$
if and only if $V_{i_1}'\cap\dots\cap V_{i_k}'\ne\emptyset$. We prove the claim only for $k=2$, since 
we apply it only in this case. A similar proof holds for $k > 2$. 

Thus, we show that if $V_i\cap V_j\ne\emptyset$, then
$V'_i\cap V_j'\ne\emptyset$.
If $x \in V_i \cap V_j$, then there exist $a_i \in V'_i$ and $a_j
\in V'_j$ for which $d(x,a_i) < d(a_i, A - V'_i)/2$ and $d(x,a_j) <
d(a_j, A - V'_j)/2$. Suppose $d(a_i, A - V'_i) = \max\{d(a_i, A -
V'_i), d(a_j, A - V'_j)\}$. Then $d(a_i,a_j) < d(a_i, A - V'_i)$, so
$a_j \in V'_i$, which means that $a_j \in V'_i \cap V'_j $.
\end{proof}
\end{prop}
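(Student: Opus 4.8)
The plan is to produce the extension $\mathcal V$ by an explicit construction, ``fattening'' each $V_i'$ into an open subset of $X$ whose local radius is governed by how far a point sits from the part of $A$ lying outside $V_i'$. Concretely, I would set
$$
V_i = \bigcup_{a\in V_i'} B\bigl(a,\, d(a, A - V_i')/2\bigr),
$$
with the convention $d(a,\emptyset) = +\infty$ (so that $V_i = X$ when $V_i' = A$). Each $V_i$ is a union of open balls of $X$, hence open in $X$. Moreover $V_i'\subseteq V_i$, since for $a\in V_i'$ the set $V_i'$ is a neighbourhood of $a$ in $A$ and therefore $d(a, A - V_i') > 0$; consequently $\{V_i\}$ still covers $A$ and extends $\mathcal V'$. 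Finally $V_i\cap A = V_i'$: the inclusion $\supseteq$ is what was just observed, and if $x\in A$ lies in $B(a, d(a, A - V_i')/2)$ for some $a\in V_i'$, then $d(x,a) < d(a, A - V_i')$, so $x\notin A - V_i'$, i.e.\ $x\in V_i'$.

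It remains to check that $\mathcal V$ and $\mathcal V'$ have the same nerve. One direction costs nothing: as $V_i'\subseteq V_i$, any finite collection with $V_{i_1}'\cap\dots\cap V_{i_k}'\ne\emptyset$ also has $V_{i_1}\cap\dots\cap V_{i_k}\ne\emptyset$. For the converse, suppose $x\in V_{i_1}\cap\dots\cap V_{i_k}$ and pick, for each $j$, a point $a_j\in V_{i_j}'$ with $d(x,a_j) < r_j/2$, where $r_j = d(a_j, A - V_{i_j}')$. Choosing $j_0$ so that $r_{j_0}$ is \emph{smallest}, the triangle inequality gives, for every $j$,
$$
d(a_{j_0},a_j)\le d(a_{j_0},x) + d(x,a_j) < r_{j_0}/2 + r_j/2 \le r_j ,
$$
and since $a_{j_0}\in V_{i_{j_0}}'\subseteq A$ this forces $a_{j_0}\notin A - V_{i_j}'$, i.e.\ $a_{j_0}\in V_{i_j}'$. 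Hence $a_{j_0}\in V_{i_1}'\cap\dots\cap V_{i_k}'$, which is therefore nonempty, and the nerves coincide.

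The one place that needs care is this last computation. The natural first instinct is to single out the center with the \emph{largest} radius $r_j$, which only delivers the intersection for $k=2$ (the sole case needed in the applications); replacing ``largest'' by ``smallest'' is exactly what makes the single center $a_{j_0}$ land in every $V_{i_j}'$ at once and so handles all $k$ uniformly. The remaining points are routine bookkeeping: one should check that each distance $d(a, A - V_i')$ appearing is strictly positive --- this is precisely where openness of the $V_i'$ in $A$ is used --- and dispose of the degenerate cases $V_i' = \emptyset$ (then $V_i = \emptyset$) and $V_i' = A$ (then $V_i = X$ via the convention above), neither of which affects the nerve.
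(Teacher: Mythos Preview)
Your proof is correct and follows the same construction as the paper, defining $V_i$ as the union of balls $B(a,\,d(a,A-V_i')/2)$ and comparing nerves via the triangle inequality. The one minor difference is that the paper selects the center with the \emph{largest} radius and therefore only writes out the case $k=2$ (deferring $k>2$ to ``a similar proof''), whereas your choice of the \emph{smallest} radius places that single center in every $V_{i_j}'$ simultaneously and so handles all $k$ at once; this is a genuine, if small, improvement in presentation.
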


We refer to ~\cite{Hu} for the definitions of absolute neighborhood
retracts (ANRs) and absolute neighborhood extensors (ANEs).

\begin{thm} ~\cite{Hu}
A metrizable space $X$ is an ANR for
metrizable spaces iff it is an ANE
for metrizable spaces. \label{ANE}
\end{thm}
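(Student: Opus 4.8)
The plan is to deduce the equivalence from two classical facts that I would quote: (a) every metrizable space embeds homeomorphically onto a \emph{closed} subset of a normed linear space (the Arens--Eells embedding; a Kuratowski--Wojdys\l awski embedding would do as well), and (b) the Dugundji extension theorem, namely that a normed linear space---indeed any convex subset of a locally convex topological vector space---is an absolute extensor for the class of metrizable spaces. Granting these, both implications become essentially formal.

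First I would prove that an ANE $X$ is an ANR. Using (a), fix a homeomorphism of $X$ onto a closed subset of a normed linear space $L$ and identify $X$ with that closed subset. Applying the ANE property to the closed set $X\subseteq L$ and the identity map $\id_X\colon X\to X$, we obtain an open neighborhood $U$ of $X$ in $L$ together with an extension $r\colon U\to X$; since $r|_X=\id_X$, the map $r$ is a retraction, so $X$ is a neighborhood retract of $L$. To upgrade this to an arbitrary closed embedding $X\subseteq Y$ (which is what the definition of ANR requires), note that by (b) the inclusion $X\hookrightarrow L$ extends to a map $g\colon W\to L$ on some open neighborhood $W$ of $X$ in $Y$; then $g^{-1}(U)$ is an open neighborhood of $X$ in $Y$, it contains $X$ because $g(X)=X\subseteq U$, and $r\circ g$ restricted to it is a retraction onto $X$. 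Hence $X$ is an ANR.

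Conversely, suppose $X$ is an ANR, let $A$ be a closed subset of a metrizable space $Y$, and let $f\colon A\to X$ be continuous. Using (a) once more, regard $X$ as a closed subset of a normed linear space $L$; since $X$ is an ANR there is an open neighborhood $W$ of $X$ in $L$ and a retraction $\rho\colon W\to X$. By the Dugundji extension theorem (b) the composite $A\xrightarrow{f}X\hookrightarrow L$ extends to a continuous map $\bar f\colon Y\to L$. Then $U:=\bar f^{-1}(W)$ is an open neighborhood of $A$ in $Y$, and $\rho\circ\bar f|_U\colon U\to X$ is a continuous extension of $f$. Thus $X$ is an ANE, which completes the equivalence.

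I expect the genuine difficulty to lie entirely in ingredient (b): the proof of the Dugundji extension theorem requires choosing a locally finite open cover of $Y\setminus A$ refining the cover by balls whose radius is comparable to the distance to $A$, building an associated partition of unity, forming the barycentric extension with values in the convex set, and---this is the delicate point---verifying continuity of the extension at the points of $A$. Ingredient (a) is also nontrivial but standard. Everything else above is routine manipulation of neighborhoods, retractions, and preimages; the only mild subtlety internal to the argument is the passage in the first implication from a single convenient closed embedding to all closed embeddings, which is itself handled by (b).
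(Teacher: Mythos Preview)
The paper does not actually prove this theorem; it merely quotes it from Hu's \emph{Theory of Retracts} and uses it as a black box. So there is no proof in the paper to compare against.

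Your argument is correct and is in fact the classical one found in Hu (and in Borsuk): embed $X$ closedly into a normed linear space via Arens--Eells or Kuratowski--Wojdys{\l}awski, and use Dugundji's theorem that such a space is an AE for metrizable spaces. One small simplification: in the direction ANE $\Rightarrow$ ANR you do more work than necessary. Given an arbitrary closed embedding $X\subseteq Y$, you can apply the ANE hypothesis \emph{directly} to the identity $\id_X\colon X\to X$ with $A=X$, $Z=Y$, obtaining at once a retraction of a neighborhood of $X$ in $Y$ onto $X$; there is no need to pass through the auxiliary embedding into $L$ and then transfer back. The embedding into $L$ together with Dugundji is genuinely needed only for the converse direction ANR $\Rightarrow$ ANE, exactly as you use it there. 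Your assessment of where the real content lies (Dugundji's extension theorem, and to a lesser extent the closed embedding into a normed space) is accurate.
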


The following is a version of a lemma that appears in ~\cite{Wa}.

\begin{thm}[Walsh Lemma] \label{Walsh}
Let $X$ be a separable metric space, $A$ a subset of $X$, $K$ a metric separable ANR,
and $f: A \rightarrow K$ a map. Then, for any $\epsilon > 0$, there
is an open set $U \supset A$ and a map $g: U \rightarrow K$ such
that:
\begin{enumerate}
    \item $g(U)$ is contained in an $\epsilon$-neighborhood of $f(A)$
    \item $g|_A$ is homotopic
to $f$.
\end{enumerate}

\begin{proof} We use the fact that any Polish space is homeomorphic
to a closed subspace of a Hilbert space H ~\cite{Ch}. Since $K$ is an ANR,
there is an open neighborhood $O$ of $K$ in $H$, and a retraction
$r:O \rightarrow K$. For every $y \in K$,
pick $\delta_{y} > 0$ such that:
\begin{enumerate}
    \item [(i)]$B(y, 2\delta_{y}) \subset O$,
    \item [(ii)] For all $y_1,y_2 \in B(y, 2\delta_{y})$, we have $d(r(y_1),r(y_2)) < \epsilon$ .
\end{enumerate}

For each $a \in A$, pick a neighborhood $U_a$ of $a$ that is open in $A$ so that
$f(U_a) \subset B(f(a),\delta_{f(a)})$. Let $\mathcal{V'} =
\{V'_i\}$ be a locally finite refinement of the collection of $U_a$, and
$\mathcal{V} = \{V_i\}$ the cover obtained by applying Proposition ~\ref{nerve} to $\mathcal V'$. Let $U = \bigcup_iV_i$.

For each $i$, fix $a_i \in V'_i$. Let $\{f_{i}: V_i \in \mathcal{V}\}$ be a partition of unity
subordinate to  $\mathcal{V}$. Define $h: U \rightarrow H$ by $h(u) = \sum_{i}f_i(u)f(a_i)$.

Choose any $u \in U$. Then $u$ lies in precisely $k$ of the $V_i$,
say in $V_{i_1},...,V_{i_k}$. Assume that $\delta_{f(a_{i_j})} =
\max\{\delta_{f(a_{i_1})},...,\delta_{f(a_{i_k})}\}$. Then all the
$f(a_{i_l})$ lie in $B(f(a_{i_j}),2\delta_{f(a_{i_j})})$, so $h(u)$
lies in this ball too. This means that $h$ is a map from $U$ to $O$.

Define $g: U \rightarrow K$ by $g = r \circ h$. If $u \in U$, then
we have seen that $h(u)$ lies in $B(f(a_u),2\delta_{f(a_u)})$ for
some $a_u \in A$, so $d(g(u),f(a_u)) < \epsilon$, which implies that
$g(U) \subset N_\epsilon(f(A))$. Note here that we could have taken
the $V_i$ to have as small a diameter as required, so the distance
between $u$ and $a_u$ can be made as small as necessary. This will
be made use of in Lemma ~\ref{extn}.

For every $a \in A$, $h(a)$ and $f(a)$ lie in some convex ball, so
$h|_A$ is homotopic to $f$ in $O$ via the straight line homotopy.
The composition of this homotopy with $r$ is then a homotopy in $K$
between $g|_A$ and $f$.

\end{proof}
\end{thm}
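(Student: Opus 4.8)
The plan is to linearize the target and then interpolate. First I would embed $K$ as a closed subspace of a separable Hilbert space $H$ (using that $K$, being a separable metric ANR, is Polish); since $K$ is an ANR there is an open set $O \subseteq H$ with $K \subseteq O$ and a retraction $r \colon O \to K$. Everything then reduces to producing a map $h$ from a neighborhood of $A$ into $O$ that is ``close'' to $f$ on $A$, after which $g := r \circ h$ will do the job.

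Next I would fix the local scales. For each $y \in K$ choose $\delta_y > 0$ small enough that $B(y, 2\delta_y) \subseteq O$ and $d(r(z), r(z')) < \epsilon$ for all $z, z' \in B(y, 2\delta_y)$; this is possible since $O$ is open and $r$ is continuous. Then, using continuity of $f$, for each $a \in A$ choose a neighborhood $U_a$ of $a$, open in $A$, with $f(U_a) \subseteq B(f(a), \delta_{f(a)})$.

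Then I would globalize the cover. Take a locally finite refinement $\mathcal V' = \{V'_i\}$ of $\{U_a : a \in A\}$ by sets open in $A$, with each $V'_i$ contained in some member $U_{a(i)}$, and apply Proposition~\ref{nerve} to extend $\mathcal V'$ to a cover $\mathcal V = \{V_i\}$, by sets open in $X$, of an open neighborhood $U := \bigcup_i V_i$ of $A$, with the same nerve and $V_i \cap A = V'_i$. Choose a point $a_i \in V'_i$ for each $i$, a partition of unity $\{\varphi_i\}$ subordinate to $\mathcal V$, and set $h(u) = \sum_i \varphi_i(u) f(a_i)$ for $u \in U$. The crucial claim is $h(U) \subseteq O$. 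Fix $u \in U$; only finitely many $\varphi_i(u)$ are nonzero, say for $i \in \{i_1, \dots, i_k\}$, and each such $V_{i_j}$ contains $u$, so these sets pairwise meet, whence by the nerve-preservation in Proposition~\ref{nerve} the sets $V'_{i_1}, \dots, V'_{i_k}$ pairwise meet. A short estimate using the containments $V'_{i_j} \subseteq U_{a(i_j)}$ then shows that all the points $f(a_{i_1}), \dots, f(a_{i_k})$ lie in a single ball $B(y, 2\delta_y) \subseteq O$ (take $y = f(a_{i_m})$ where $i_m$ realizes the largest of the relevant $\delta$'s); since this ball is convex, the convex combination $h(u)$ lies in it as well.

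Finally, set $g := r \circ h \colon U \to K$. Conclusion (1) holds because each $h(u)$ lies in some ball $B(f(a_u), 2\delta_{f(a_u)})$ with $a_u \in A$, so $d(g(u), f(a_u)) = d(r(h(u)), r(f(a_u))) < \epsilon$, giving $g(U) \subseteq N_\epsilon(f(A))$. Conclusion (2) holds because for each $a \in A$ both $h(a)$ and $f(a)$ lie in a common convex ball contained in $O$, so the straight-line homotopy joins $h|_A$ to $f$ inside $O$, and post-composing with $r$ yields a homotopy in $K$ from $g|_A$ to $f$. I expect the inclusion $h(U) \subseteq O$ to be the only genuinely delicate step: it is precisely here that one needs both the nerve-preservation of Proposition~\ref{nerve} (overlaps among the $V_i$ must force overlaps among the $V'_i$, so the sampled values $f(a_i)$ cluster inside a ball on which $r$ oscillates by less than $\epsilon$) and the convexity of metric balls in $H$. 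A secondary point worth recording is that $A$ is not assumed closed in $X$, so Tietze--Dugundji extension is unavailable and this partition-of-unity construction over the enlarged cover is what replaces it; it also leaves the freedom to take the $V_i$ of arbitrarily small diameter, so that each $d(u, a_u)$ can be made as small as desired --- a fact the paper flags for later use.
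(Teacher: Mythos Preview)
Your proposal is correct and follows the paper's proof essentially line for line: embed $K$ closed in a Hilbert space $H$, take a neighborhood retraction $r:O\to K$, choose the local scales $\delta_y$, pull back to neighborhoods $U_a$ in $A$, refine and extend via Proposition~\ref{nerve}, average with a partition of unity to get $h$, and set $g=r\circ h$. Your treatment of the key step $h(U)\subset O$ is in fact slightly more explicit than the paper's, since you spell out that the pairwise nerve-preservation of Proposition~\ref{nerve} is what forces the sampled values $f(a_{i_j})$ to cluster in a single convex ball; the paper asserts this clustering directly. You also correctly record the small-diameter freedom on the $V_i$, which the paper flags for use in Lemma~\ref{extn}.
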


\begin{thm}  ~\cite{B}
\label{epsilon} Let $K$ be a compact ANE. Then there exists a constant 
$\epsilon(K) > 0$ such that for any metric space $X$ and maps 
$f,g: X \rightarrow K$, if $d(f(x),g(x)) < \epsilon(K)$ 
for all $x \in X$, then $f$ is homotopic to $g$.
\end{thm}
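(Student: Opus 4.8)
The plan is to realize $K$ concretely inside a linear space where straight-line homotopies are available, and to extract the uniform constant $\epsilon(K)$ from compactness. First, by \theoref{ANE} the compact ANE $K$ is a compact ANR, and, exactly as in the proof of the Walsh Lemma, we may regard $K$ as a closed subspace of a Hilbert space $H$ (\cite{Ch}). Since $K$ is an ANR there is an open set $O\supset K$ in $H$ together with a retraction $r\colon O\to K$, so that $r|_K=\id_K$. Now use compactness: for each $y\in K$ pick $\delta_y>0$ with the ball $B(y,2\delta_y)\subset O$, cover $K$ by finitely many balls $B(y_1,\delta_{y_1}),\dots,B(y_n,\delta_{y_n})$, and put $\epsilon(K)=\min_j\delta_{y_j}>0$. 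A one-line triangle-inequality argument then shows that the $\epsilon(K)$-neighborhood $N_{\epsilon(K)}(K)=\{z\in H: d(z,K)<\epsilon(K)\}$ is contained in $O$: if $d(z,k)<\epsilon(K)$ with $k\in K$ and $d(k,y_j)<\delta_{y_j}$, then $d(z,y_j)<2\delta_{y_j}$, so $z\in O$.

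With this choice of $\epsilon(K)$, suppose $f,g\colon X\to K$ satisfy $d(f(x),g(x))<\epsilon(K)$ for all $x\in X$. Define $F\colon X\times[0,1]\to H$ by $F(x,t)=(1-t)f(x)+tg(x)$, the straight-line homotopy formed in $H$. For every $(x,t)$ we have $d(F(x,t),f(x))=t\,d(f(x),g(x))<\epsilon(K)$ and $f(x)\in K$, hence $F(x,t)\in N_{\epsilon(K)}(K)\subset O$; thus $F$ is a continuous map into $O$. Composing with the retraction, $r\circ F\colon X\times[0,1]\to K$ is a homotopy with $(r\circ F)(x,0)=r(f(x))=f(x)$ and $(r\circ F)(x,1)=r(g(x))=g(x)$, so $f$ is homotopic to $g$, as required.

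The only genuinely delicate point, and the one I would treat most carefully, is the compactness step producing the \emph{uniform} constant $\epsilon(K)$: for a noncompact ANR the neighborhood $O$ of $K$ in $H$ need not contain any neighborhood of the form $N_\epsilon(K)$, and the straight-line homotopy could then escape the domain of $r$. Everything else — the closed embedding into a Hilbert space, the neighborhood retraction, and the convexity of $H$ that keeps the segment from $f(x)$ to $g(x)$ within $\epsilon(K)$ of $K$ — is already in place from the machinery assembled for the Walsh Lemma, so no further work is needed there.
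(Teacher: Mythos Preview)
The paper does not give a proof of this theorem; it is simply quoted from Borsuk~\cite{B}. Your argument is the standard one and is correct: embed $K$ as a closed subspace of a Hilbert space, use compactness to obtain a uniform $\epsilon(K)$-tube around $K$ lying inside the domain $O$ of a neighborhood retraction $r$, and then push the straight-line homotopy in $H$ back into $K$ via $r$.

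One small point you could make explicit: the metric $d$ on $K$ in the hypothesis need not coincide with the metric induced from the embedding into $H$. Your inequality $d(F(x,t),f(x))=t\,d(f(x),g(x))$ uses the Hilbert-space norm, so strictly speaking you have produced the constant for the induced metric. Since $K$ is compact, the identity $(K,d)\to(K,d_H)$ is uniformly continuous, and you can shrink $\epsilon(K)$ accordingly; this is routine, but worth a sentence given the care you take elsewhere.
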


\begin{prop} Let $K$ be a compact ANE, $y_{0} \in K$ and $PK$ the path space $\{\phi:
[0,1] \rightarrow K|\phi(1) = y_{0}\}$ with sup norm $D$. If $\epsilon(K)$ is as in
{Theorem~\ref{epsilon}}, then any two maps $f,g: X \rightarrow PK$
such that $D(f(x),g(x)) < \epsilon(K)$ for all $x \in X$, are
homotopic to each other.
\begin{proof} Since $PK$ is endowed with the sup norm, we have
$$d(f(x)(t),g(x)(t)) < \epsilon(K)$$ for every $x \in X, t \in [0,1]$.
Define $F: X \times I \rightarrow K$ by $F(x,t) = f(x)(t)$ and $G: X \times I
\rightarrow K$ by $G(x,t) = g(x)(t)$.

Since $d(F(x,t),G(x,t)) < \epsilon(K)$ for all $(x,t) \in X \times
I$, there exists a homotopy $h_{s}: X \times I \rightarrow K$
between $F$ and $G$. Define $\tilde{h}_{s}: X \rightarrow PK$ by
$\tilde{h}_{s}(x)(t) = h_{s}(x,t)$. Then $\tilde{h}_{s}$ is the
required homotopy between $f(x)$ and $g(x)$. \label{AE1}
\end{proof}
\end{prop}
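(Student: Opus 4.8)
The plan is to trade maps into the path space $PK$ for maps into $K$ itself by currying, and then to invoke Theorem~\ref{epsilon}. Since $[0,1]$ is compact Hausdorff, a continuous map $\phi\colon X\to PK$ (the sup metric topology on $PK$ being the compact--open topology, as $[0,1]$ is compact) is the same datum as a continuous map $\Phi\colon X\times[0,1]\to K$ satisfying $\Phi(x,1)=y_0$ for all $x$, the correspondence being $\Phi(x,t)=\phi(x)(t)$. So I would put $F(x,t)=f(x)(t)$ and $G(x,t)=g(x)(t)$, obtaining maps $F,G\colon X\times[0,1]\to K$ that both restrict to the constant map $y_0$ on $X\times\{1\}$.

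Next, since $D$ is the sup norm, the hypothesis $D(f(x),g(x))<\epsilon(K)$ says exactly that $\sup_{t\in[0,1]}d(f(x)(t),g(x)(t))<\epsilon(K)$, hence $d(F(x,t),G(x,t))<\epsilon(K)$ for every $(x,t)\in X\times[0,1]$. Applying Theorem~\ref{epsilon} with $X\times[0,1]$ in the role of its ``$X$'' yields a homotopy $h\colon X\times[0,1]\times[0,1]\to K$ with $h(x,t,0)=F(x,t)$ and $h(x,t,1)=G(x,t)$. Currying once more, $\widetilde h_s(x)(t)=h(x,t,s)$ defines a homotopy $\widetilde h_s\colon X\to C([0,1],K)$, $s\in[0,1]$, with $\widetilde h_0=f$ and $\widetilde h_1=g$; this is the candidate homotopy, provided each $\widetilde h_s(x)$ actually lies in $PK$.

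The point I expect to require care — and the only real obstacle — is precisely this: one needs $h(x,1,s)=y_0$ for all $x$ and all $s$, for otherwise $\widetilde h_s$ is merely a homotopy through the free path space $C([0,1],K)$ rather than through $PK$. Theorem~\ref{epsilon} as quoted does not assert this, so I would go back to Borsuk's argument behind it: one embeds $K$ as a closed subset of a normed linear space, chooses a retraction $r$ of a neighbourhood $O\supset K$ onto $K$, connects $F$ to $G$ by the straight-line homotopy $s\mapsto(1-s)F+sG$ (whose image remains in $O$ exactly because $F$ and $G$ are $\epsilon(K)$-close and $K$ is compact), and post-composes with $r$. Such a homotopy is \emph{stationary at every point of the coincidence set} $\{(x,t): F(x,t)=G(x,t)\}$, in particular on $X\times\{1\}$ where $F=G=y_0$; hence $h(x,1,s)=r(y_0)=y_0$ throughout, and $\widetilde h_s$ is a bona fide homotopy in $PK$ from $f$ to $g$. (Alternatively, and more cheaply, one can sidestep Theorem~\ref{epsilon} altogether: the formula $r_s(\phi)(t)=\phi\big((1-s)t+s\big)$ deformation retracts $PK$ onto the constant path at $y_0$, so $PK$ is contractible and \emph{any} two maps into it are homotopic.)
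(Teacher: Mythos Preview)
Your main argument mirrors the paper's exactly: curry to $F,G\colon X\times I\to K$, apply Theorem~\ref{epsilon}, and curry back to obtain $\tilde h_s$. You are in fact more careful than the paper, which simply declares that $\tilde h_s$ maps into $PK$ without verifying $h_s(x,1)=y_0$; your observation that the homotopy supplied by Theorem~\ref{epsilon} must be taken stationary on the coincidence set $X\times\{1\}$ (as happens automatically with the straight-line-and-retract construction behind Borsuk's theorem) plugs a genuine gap in the paper's presentation. Your parenthetical alternative---that $PK$ is contractible via $r_s(\phi)(t)=\phi((1-s)t+s)$, so \emph{any} two maps into it are homotopic---is correct and considerably simpler, and would serve equally well for the paper's sole use of this proposition in Lemma~\ref{extn}.
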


The following is well known.

\begin{prop} If $K$ is an ANE, so is $PK$.
\begin{proof} Suppose $K$ is an ANE. Let $A$ be a closed subspace of
a metric space $X$ and $f:A \rightarrow PK$ a map. Define $F: A
\times I \rightarrow K$ by $F(x,t) = f(x)(t)$. By hypothesis, $F$
extends over an open neighborhood $V$ of $A \times I$ in $X \times
I$. For each $a \in A$, find an open neighborhood $U_{a}$ such that
$U_{a} \times I \subset V$. Let $U = \bigcup_{a \in A}U_{a}$. The
map $\tilde{f}: U \rightarrow PK$ given by $\tilde{f}(x)(t) =
\tilde{F}(x,t)$ is the required extension. \label{AE2}
\end{proof}
\end{prop}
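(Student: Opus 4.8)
The plan is to verify directly the extension property that defines an ANE for $PK$; note that $PK$ is metrizable, via the sup metric $D$, so this is the appropriate notion. So let $X$ be a metrizable space, $A\subseteq X$ a closed subset, and $f: A\to PK$ a map; I must produce an open set $U\supseteq A$ and an extension $\widetilde f: U\to PK$ of $f$.

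The first step is to replace $f$ by its adjoint. Because $I=[0,1]$ is compact, a function $f: A\to PK$ is continuous for the sup-metric topology if and only if the associated function $F: A\times I\to K$, $F(a,t)=f(a)(t)$, is continuous; both implications follow from a routine $\epsilon$-estimate using uniform continuity of $F$ along the compact slices $\{a\}\times I$. Note that $F(a,1)=y_0$ for all $a\in A$, since $f$ takes values in $PK$.

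The one point needing care is to arrange that the extension of $F$ still obeys the endpoint condition (that the value at $t=1$ is $y_0$), so I enlarge the domain. The set $B=(A\times I)\cup(X\times\{1\})$ is closed in the metrizable space $X\times I$, and the map $\overline F: B\to K$ equal to $F$ on $A\times I$ and constantly $y_0$ on $X\times\{1\}$ is well defined — the two descriptions agree on the overlap $A\times\{1\}$ — and continuous by the pasting lemma. Since $K$ is an ANE, $\overline F$ extends to a map $\widetilde F: V\to K$ on an open neighborhood $V$ of $B$ in $X\times I$. For each $a\in A$ the compact set $\{a\}\times I$ lies in the open set $V$, so the tube lemma yields an open $U_a\ni a$ in $X$ with $U_a\times I\subseteq V$; set $U=\bigcup_{a\in A}U_a$, an open neighborhood of $A$ with $U\times I\subseteq V$. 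Now define $\widetilde f: U\to PK$ by $\widetilde f(x)(t)=\widetilde F(x,t)$. This is a well-defined map into $PK$ because $(x,1)\in X\times\{1\}\subseteq B$ forces $\widetilde f(x)(1)=\overline F(x,1)=y_0$; it is continuous by the adjunction of the first step; and $\widetilde f|_A=f$ because $\widetilde F$ restricts to $F$ on $A\times I$. Thus $PK$ is an ANE. The only genuine obstacle is keeping the extended paths anchored at $y_0$, and that is exactly what extending over $B$ rather than over $A\times I$ accomplishes; the adjunction and the tube-lemma argument are routine.
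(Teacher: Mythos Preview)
Your argument is correct and follows the same strategy as the paper: pass to the adjoint $F:A\times I\to K$, extend via the ANE property of $K$, then use the tube lemma to produce $U$ with $U\times I\subseteq V$ and define $\widetilde f$ by un-adjointing. The one substantive difference is that you extend $\overline F$ from the larger closed set $B=(A\times I)\cup(X\times\{1\})$ rather than from $A\times I$ alone; this guarantees $\widetilde F(x,1)=y_0$ for all $x\in U$, so that $\widetilde f$ actually lands in the \emph{based} path space $PK$. The paper's proof, as written, extends only over $A\times I$ and never verifies the endpoint condition, so your version is in fact more complete on exactly the point you flagged as ``the one point needing care.''
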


\begin{lemma}\label{extn}
Let $X$ be a compact metric space, $A \subset X$, $K$ a compact
metric ANR, and $f: X \rightarrow K$ a map such that the restriction
of $f$ to $A$ is nullhomotopic. Then there exists $U \supset A$ open
in $X$ such that the restriction of $f$ to $U$ is nullhomotopic.

\begin{proof} Since $K$ is compact, there is an $\epsilon$ such
that any two $\epsilon$-close maps to $K$ are homotopic. Let $D$ be the
metric on $K$ and $d$ the metric on $X$. Let $PK$ be the path space
$\{\phi: [0,1] \rightarrow K| \phi(1) = y_{0}\}$ (for some $y_{0} \in K$),
under the sup metric $D'$. As $f|_{A}$ is nullhomotopic, there is a
map $F: A \rightarrow PK$ satisfying $F(a)(0) = f(a)$ for all $a \in A$.
By the uniform continuity of $f$, there is a $\delta > 0$ such that
$d(x,y) < \delta \Rightarrow D(f(x),f(y)) < \displaystyle\frac{\epsilon}{2}.$

By {Proposition~\ref{AE2}} and Theorem ~\ref{ANE}, $PK$ is an ANR for metric spaces. By
{Proposition~\ref{AE1}}, any two $\epsilon$-close maps to it are
homotopic. We apply Theorem ~\ref{Walsh} to $F$, and construct an open
neighborhood $U$ of $A$ and a map $G: U \rightarrow PK$ such that
for every $u \in U$, there exists $a_u \in A$ such that
$D'(G(u),F(a_u)) < \displaystyle\frac{\epsilon}{2}$. As noted in the proof of
Theorem ~\ref{Walsh}, we can construct $U$ so that diam$V_i < \delta$
for all $i$, so $d(u,a_u) < \delta$. Let $g: U \rightarrow K$ be given by $g(u) =
G(u)(0)$. For any $u \in U$, we have \begin{gather*}D(g(u),f(u))
\leq  D(g(u),f(a_{u})) + D(f(a_{u}),f(u)) \\ <
D(G(u)(0),F(a_{u})(0)) + \displaystyle\frac{\epsilon}{2} \\ \leq
D'(G(u),F(a_{u})) + \displaystyle\frac{\epsilon}{2} \\ <
\displaystyle\frac{\epsilon}{2} + \displaystyle\frac{\epsilon}{2} =
\epsilon. \end{gather*} It follows that $f|_{U}$ is homotopic to
$g$.

But $h_t: U \rightarrow K$ given by $h_t(u) = G(u)(t)$ is a
homotopy between $g$ and the constant map at $y_0$, so $f|_U$ is
nullhomotopic.
\end{proof}
\end{lemma}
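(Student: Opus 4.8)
The plan is to translate the hypothesis that $f|_A$ is nullhomotopic into a lifting statement against the path fibration, and then propagate that lift to an open neighbourhood using the Walsh Lemma while keeping enough quantitative control to recover $f$ itself. Pick $y_0\in K$ at which the given nullhomotopy of $f|_A$ ends, and let $PK=\{\phi\colon[0,1]\to K\mid\phi(1)=y_0\}$ with the sup metric $D'$, together with the evaluation map $p\colon PK\to K$, $p(\phi)=\phi(0)$. A map into $K$ is nullhomotopic to the constant at $y_0$ exactly when it lifts through $p$: a lift $F\colon A\to PK$ of $f|_A$, i.e.\ one with $F(a)(0)=f(a)$, gives the nullhomotopy $(a,t)\mapsto F(a)(t)$, which ends at a constant map automatically because every path in the image of $F$ ends at $y_0$. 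So I would start from such an $F$.

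Next I would extend $F$ off of $A$. For this the target $PK$ must be an ANR, which is Proposition~\ref{AE2} together with Theorem~\ref{ANE}; by Proposition~\ref{AE1} it also has the property that sup-metric-close maps into it are homotopic. Apply the Walsh Lemma (Theorem~\ref{Walsh}) to $F\colon A\to PK$: it yields an open $U\supset A$ in $X$ and a map $G\colon U\to PK$ with $G|_A$ homotopic to $F$, and, crucially, with each $u\in U$ admitting a point $a_u\in A$ for which $F(a_u)$ is $D'$-close to $G(u)$ to any prescribed tolerance; moreover, by taking the refining cover in the Walsh construction to have small mesh, one can also arrange that $d(u,a_u)$ is as small as we wish. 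Put $g=p\circ G\colon U\to K$, so $g(u)=G(u)(0)$.

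The heart of the argument is then to show $g$ is homotopic to $f|_U$ by the closeness criterion. Since $K$ is a compact ANE, Theorem~\ref{epsilon} furnishes $\epsilon=\epsilon(K)>0$ such that $\epsilon$-close maps into $K$ are homotopic. Using uniform continuity of $f$ on the compact space $X$, choose $\delta>0$ with $d(x,y)<\delta\Rightarrow D(f(x),f(y))<\epsilon/2$, and arrange in the previous step that $D'(G(u),F(a_u))<\epsilon/2$ and $d(u,a_u)<\delta$ for all $u$. Then, since $F(a_u)(0)=f(a_u)$,
\[
D(g(u),f(u))\le D\big(G(u)(0),F(a_u)(0)\big)+D\big(f(a_u),f(u)\big)<\tfrac{\epsilon}{2}+\tfrac{\epsilon}{2}=\epsilon,
\]
so $f|_U\simeq g$. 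Finally, $h_t(u):=G(u)(t)$ is a homotopy from $g=h_0$ to the constant map at $y_0=h_1$, because every $G(u)$ is a path ending at $y_0$; hence $f|_U$ is nullhomotopic, as required.

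I expect the main obstacle to be the simultaneous bookkeeping in the middle step: one must extract from a single application of the Walsh Lemma both the $D'$-closeness $D'(G(u),F(a_u))<\epsilon/2$ of the lifts and the genuine closeness $d(u,a_u)<\delta$ in $X$, the latter being exactly what allows the uniform continuity of $f$ to enter, and what forces us to keep the diameters of the cover sets in Proposition~\ref{nerve} small. It is worth noting why routing through $PK$ is the right move, as opposed to extending the nullhomotopy of $f|_A$ directly over a neighbourhood using only the ANE property of $K$ and the tube lemma: such a crude extension would in general neither restrict to $f$ on $U$ nor terminate at a constant map, whereas a lift to $PK$ builds in both of these constraints.
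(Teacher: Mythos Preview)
Your proposal is correct and follows essentially the same approach as the paper: lift the nullhomotopy to a map $F\colon A\to PK$, extend via the Walsh Lemma to $G\colon U\to PK$ with controlled $D'$-distance to $F(a_u)$ and small $d(u,a_u)$, set $g=G(\cdot)(0)$, use Theorem~\ref{epsilon} and uniform continuity of $f$ to get $f|_U\simeq g$, and finish with the evaluation homotopy $h_t(u)=G(u)(t)$. Your identification of the bookkeeping issue---simultaneously controlling $D'(G(u),F(a_u))$ and $d(u,a_u)$ via the mesh of the Walsh cover---is exactly the point the paper flags.
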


\begin{cor} Let $X$ be a compact metric ANR, and let $A \subset X$ be
contractible in $X$. Then there exists an open set $U \supset A$
that is contractible in $X$. \label{contractible}
\end{cor}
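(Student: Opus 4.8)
The plan is to obtain this as an immediate consequence of Lemma~\ref{extn}, applied with the target ANR taken to be $X$ itself and the map taken to be the identity. The point is that ``$A$ is contractible in $X$'' is precisely the case $K = X$, $f = \id_X$ of the hypothesis ``the restriction of $f$ to $A$ is nullhomotopic'' appearing in Lemma~\ref{extn}: indeed $\id_X|_A$ is just the inclusion $j\colon A \hookrightarrow X$, and $A$ being contractible in $X$ means exactly that $j$ is nullhomotopic.

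So first I would note that since $X$ is by hypothesis a compact metric ANR, it is legitimate to use $X$ in the role of the space ``$K$'' in Lemma~\ref{extn}, and $\id_X \colon X \to X$ in the role of the map ``$f$''. The restriction $\id_X|_A = j$ is nullhomotopic, so all the hypotheses of Lemma~\ref{extn} are satisfied. Invoking the lemma then yields an open set $U \supset A$ in $X$ such that $\id_X|_U$ is nullhomotopic.

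Finally I would observe that $\id_X|_U$ is nothing but the inclusion $U \hookrightarrow X$, so its nullhomotopy says exactly that $U$ is contractible in $X$, which is the desired conclusion. There is essentially no obstacle to overcome: the only ``step'' is the observation that contractibility in $X$ is the self-referential ($K = X$, $f = \id$) instance of the nullhomotopic-restriction hypothesis of Lemma~\ref{extn}, after which the corollary is just a transcription of the lemma's conclusion.
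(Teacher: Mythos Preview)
Your proposal is correct and is exactly the argument the paper intends: the corollary is stated immediately after Lemma~\ref{extn} with no separate proof, and the obvious specialization $K=X$, $f=\id_X$ is precisely what is meant.
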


\begin{thm} For a compact metric ANR $X$, $\cat_gX = \cat X$.
\begin{proof} Clearly $\cat_gX \leq \cat X$, and equality
holds if $\cat_gX$ is infinite. Suppose $\cat_gX =
n$. Then we can write $X = \bigcup_{i = 0}^{n}A_i$, where the
$A_i$ are contractible in $X$. By {Corollary~\ref{contractible}},
there exist open sets $U_i$ containing $A_i$ that are
contractible in $X$ for each $i$. It follows that $\cat X \leq n.$
\end{proof}
\end{thm}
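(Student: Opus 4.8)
The plan is to reduce the statement directly to Corollary~\ref{contractible}, which carries all the real content. First, the inequality $\cat_g X \le \cat X$ is immediate from the definitions: any cover of $X$ by \emph{open} sets contractible in $X$ is, in particular, a cover of $X$ by subsets contractible in $X$, so the infimum defining $\cat_g X$ ranges over a family containing all the covers counted by $\cat X$. If $\cat_g X = \infty$, this inequality already forces $\cat X = \infty$, and equality holds trivially; so for the remainder we may assume $\cat_g X = n < \infty$.

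Next I would fix a witnessing decomposition $X = \bigcup_{i=0}^{n} A_i$ in which each $A_i$ is contractible in $X$, and apply Corollary~\ref{contractible} to each $A_i$ separately. The hypotheses of that corollary are met verbatim: $X$ is a compact metric ANR, and each $A_i$ is contractible in $X$ by choice. This produces, for every $i$, an open set $U_i \supset A_i$ that is contractible in $X$. Since $U_i \supset A_i$ for each $i$, the family $\{U_i\}_{i=0}^{n}$ still covers $X$, and it is now an open cover by sets contractible in $X$. Hence $\cat X \le n = \cat_g X$, and together with the reverse inequality this gives $\cat X = \cat_g X$.

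There is essentially no obstacle left at this level: the entire difficulty has been absorbed into Lemma~\ref{extn} and Corollary~\ref{contractible}, whose proofs in turn rested on the Walsh Lemma (Theorem~\ref{Walsh}), the homotopy estimate for compact ANEs (Theorem~\ref{epsilon}), and the fact that the path space $PK$ is again an ANE (Propositions~\ref{AE1} and~\ref{AE2}). The only thing to verify is the bookkeeping just described, namely that Corollary~\ref{contractible} applies to each piece of the decomposition and that enlarging each $A_i$ to $U_i$ preserves the covering property; both are immediate. Thus the proof is a short assembly of the results already established.
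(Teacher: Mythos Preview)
Your proposal is correct and follows essentially the same approach as the paper: establish $\cat_g X \le \cat X$ from the definitions, then for $\cat_g X = n < \infty$ apply Corollary~\ref{contractible} to each piece of a witnessing decomposition to obtain open sets $U_i \supset A_i$ contractible in $X$, yielding $\cat X \le n$. The paper's proof is precisely this argument, stated more tersely.
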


The following proposition illustrates some of the basic properties of $\cat_g$. The proofs are nearly identical to those in the case of the usual definition of category (see ~\cite{CLOT}), and so are omitted. 

\begin{prop}[Properties of $\cat_g $]\label{properties} If $A,B,X,Y$ are spaces and $f: X \rightarrow Y$ is a map with mapping cone $C_f$, then the following hold:

i) $\cat_g (A \cup B) \leq \cat_g A + \cat_g B + 1$

ii) If $f$ has a right homotopy inverse, then $\cat_g X \geq \cat_g Y$

iii) If $f$ is a homotopy equivalence, then $\cat_g X = \cat_g Y$

iv) $\cat_g (C_f) \leq \cat_g Y + 1$.

\end{prop}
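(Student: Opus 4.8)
The plan is to verify each of the four properties by adapting the standard proofs for ordinary LS-category, taking care only with the places where the arguments use openness of the covering sets, since here we allow arbitrary subsets.

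For (i), suppose $\cat_g A = m$ and $\cat_g B = n$, so that $A = \bigcup_{i=0}^m A_i$ with each $A_i$ contractible in $A$, and $B = \bigcup_{j=0}^n B_j$ with each $B_j$ contractible in $B$. A set contractible in $A$ is a fortiori contractible in $A\cup B$ (compose the contracting homotopy with the inclusion), and likewise for the $B_j$; so $A\cup B$ is covered by the $m+1$ sets $A_i$ together with the $n+1$ sets $B_j$, giving a cover by $m+n+2$ sets contractible in $A\cup B$, hence $\cat_g(A\cup B)\le (m+n+2)-1 = m+n+1$. For (ii), let $s: Y\to X$ be a right homotopy inverse of $f$, so $f\circ s\simeq \id_Y$. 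If $X = \bigcup_{i=0}^n A_i$ with each $A_i$ contractible in $X$, set $B_i = s^{-1}(A_i)$. These cover $Y$, and I claim each $B_i$ is contractible in $Y$: the inclusion $B_i\hookrightarrow Y$ is homotopic to $f\circ s|_{B_i}$, and $s|_{B_i}$ factors through $A_i$, so $f\circ s|_{B_i}$ is homotopic to a composite $B_i \to A_i \hookrightarrow X \to X$ where the middle map $A_i\hookrightarrow X$ is nullhomotopic; hence $f\circ s|_{B_i}$, and therefore the inclusion of $B_i$, is nullhomotopic in $Y$. Thus $\cat_g Y\le n = \cat_g X$. Property (iii) is immediate from (ii) applied to $f$ and to its homotopy inverse.

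For (iv), the argument mirrors the classical one. Write $\cat_g Y = n$, so $Y = \bigcup_{i=0}^n A_i$ with each $A_i$ contractible in $Y$. View $C_f = Y\cup_f CX$ as the union of $Y$ and the open cone $CX\setminus X$ (the latter being contractible in $C_f$, as the whole cone deformation-retracts to the cone point inside $C_f$). Then setting $A_i' = A_i$ for $0\le i\le n$ and $A_{n+1}' = CX\setminus X$ gives a cover of $C_f$ by $n+2$ sets; it remains to check that each $A_i$, now regarded as a subset of $C_f$, is still contractible in $C_f$. This follows because the inclusion $Y\hookrightarrow C_f$ composed with the contraction of $A_i$ in $Y$ exhibits the inclusion $A_i\hookrightarrow C_f$ as nullhomotopic. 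Hence $\cat_g(C_f)\le (n+2)-1 = n+1 = \cat_g Y + 1$.

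The only point requiring genuine care — and the closest thing to an obstacle — is the bookkeeping in (iv): one must be sure that the decomposition $C_f = (CX\setminus X)\cup Y$ genuinely covers the mapping cone and that the cone piece is contractible in $C_f$ rather than merely in itself, and similarly in (ii) that $s|_{B_i}$ really does factor (up to the relevant homotopy) through the inclusion of $A_i$. None of this uses openness, which is exactly why the classical proofs transfer verbatim; this is the reason the paper omits the details, and I would do the same, noting only that every appeal to ``open set'' in the standard argument can be replaced by ``arbitrary subset'' without affecting the contractibility-in-$X$ conclusions.
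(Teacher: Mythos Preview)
Your proofs are correct and are exactly the standard arguments the paper has in mind: the paper omits all details, saying only that the proofs are ``nearly identical to those in the case of the usual definition of category'' and citing \cite{CLOT}. Your observation that none of the steps use openness of the covering sets is precisely the point, so there is nothing to compare.
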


\vspace{2mm} The following proposition shows that the sets in the
definition of the LS-category can be assumed to be
$G_{\delta}$.

\begin{prop} \label{G-delta} Let $X$ be a complete metric space.
If $A \subset X$ is contractible in $X$, then there exists a
$G_{\delta}$ set $B$ with $A \subset B \subset X$ such that $B$
is contractible in $X$.
\end{prop}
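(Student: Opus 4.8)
The plan is to reinterpret the contraction of $A$ as a single map into a path space, extend that map over a $G_\delta$ set by a classical oscillation argument, and then cut it down so that it is again a contraction.

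Since $A$ is contractible in $X$, fix a homotopy $H\colon A\times[0,1]\to X$ with $H(a,0)=a$ and $H(a,1)=y_0$ for all $a\in A$ and some fixed $y_0\in X$. Let $PX=\{\phi\colon[0,1]\to X\mid\phi(1)=y_0\}$ carry the sup metric $D$; since $[0,1]$ is compact and $X$ is complete, $PX$ is a complete metric space. Using compactness of $[0,1]$, the exponential law yields a continuous map $\widehat H\colon A\to PX$, $\widehat H(a)(t)=H(a,t)$; I would include the short uniform-continuity verification that $\widehat H$ is indeed continuous.

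Next I would extend $\widehat H$ over a $G_\delta$ set. For $x\in\overline A$ let $\omega(x)$ be the oscillation of $\widehat H$ at $x$, i.e.\ the infimum of $\operatorname{diam}_D\bigl(\widehat H(U\cap A)\bigr)$ over neighbourhoods $U$ of $x$ in $X$. Each set $\{x\in\overline A:\omega(x)<1/n\}$ is open in $\overline A$, so equals $O_n\cap\overline A$ for some $O_n$ open in $X$; hence $B_0=\{x\in\overline A:\omega(x)=0\}=\bigl(\bigcap_nO_n\bigr)\cap\overline A$ is $G_\delta$ in $X$ (recalling that $\overline A$ is itself $G_\delta$ in the metric space $X$), and $A\subseteq B_0$ by continuity of $\widehat H$. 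Since $PX$ is complete, $\widehat H$ extends to a continuous map $\widetilde H\colon B_0\to PX$, where $\widetilde H(x)$ is the limit of $\widehat H(a)$ along any sequence $a\to x$ in $A$. The condition $\widetilde H(x)(0)=x$ can fail off $A$, so set
$$B=\{\,b\in B_0:\widetilde H(b)(0)=b\,\}.$$
With $e_0\colon PX\to X$ the (continuous) evaluation at $0$, the set $B$ is the preimage of the diagonal of $X\times X$ under the continuous map $b\mapsto(e_0(\widetilde H(b)),b)$, hence closed in $B_0$; and a closed subset of a $G_\delta$ subset of a metric space is again $G_\delta$ (intersect its closure in $X$ with $B_0$), so $B$ is $G_\delta$ in $X$. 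Also $A\subseteq B$ because $H$ already satisfies $H(a,0)=a$. Finally $H'\colon B\times[0,1]\to X$ defined by $H'(b,t)=\widetilde H(b)(t)$ is continuous (it is $\widetilde H\times\id$ followed by the evaluation $PX\times[0,1]\to X$), extends $H$, and satisfies $H'(b,0)=b$ and $H'(b,1)=y_0$ since $\widetilde H(b)\in PX$; thus $B$ is contractible in $X$.

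The one genuine obstacle is the ``fibrewise'' issue: applying the same oscillation-based extension directly to $H$ on $A\times[0,1]$ would only give a $G_\delta$ subset of $\overline A\times[0,1]$, not one of the form $B\times[0,1]$, and would lose control of the endpoint $H(a,0)=a$. Routing everything through the path space $PX$, and then imposing the single closed condition $\widetilde H(b)(0)=b$, is exactly what repairs this; the remaining ingredients --- continuity of the adjoint and of its inverse, and the stability of the $G_\delta$ property under the operations used --- are routine.
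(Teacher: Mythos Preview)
Your proof is correct and follows essentially the same route as the paper's: encode the contraction as a map $A\to PX$, extend it over a $G_\delta$ set containing $A$, and read off a contraction of that larger set. The paper simply cites Lavrentieff's theorem for the extension step, whereas you unpack that theorem via the standard oscillation argument; that is a cosmetic difference.

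There is one genuine simplification you missed. Your cutting-down step to $B=\{b\in B_0:\widetilde H(b)(0)=b\}$ is unnecessary: the map $e_0\circ\widetilde H\colon B_0\to X$ is continuous and agrees with the inclusion $B_0\hookrightarrow X$ on the dense subset $A$ (since $H(a,0)=a$), so by uniqueness of continuous extensions into a Hausdorff space the two maps agree on all of $B_0$. Hence $\widetilde H(b)(0)=b$ holds automatically for every $b\in B_0$, and $B_0$ itself already works. This is exactly the sequential computation the paper carries out. Your extra step does no harm, but recognizing that the ``fibrewise'' constraint comes for free would streamline the argument.
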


The proof is based on the following classical theorem ~\cite{E2}:

\begin{thm}[Lavrentieff] \label{Lavrentieff}If  $Y$ is a complete metric space, then any continuous
map $f: A \rightarrow Y$, where $A$ is a dense subset of a space $X$,
can be extended to $F: B \rightarrow Y$, where $B$ is a $G_{\delta}$
set in $X$ containing $A$.
\end{thm}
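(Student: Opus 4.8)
The plan is to single out the set $B$ by means of the \emph{oscillation} of $f$. For $x \in X$ define
$$\omega_f(x) = \inf_{U \ni x} \operatorname{diam} f(U \cap A),$$
the infimum being taken over all open neighborhoods $U$ of $x$ in $X$; write $d$ for the metric on $Y$. Since $A$ is dense, every nonempty open $U$ meets $A$, so $\omega_f(x)$ is a well-defined element of $[0,+\infty]$. Continuity of $f$ on $A$ forces $\omega_f(a)=0$ for each $a\in A$, so that $A \subseteq B$, where I set $B := \{x \in X : \omega_f(x) = 0\}$.

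First I would check that $B$ is $G_\delta$. The key observation is that $\omega_f$ is upper semicontinuous: if $\omega_f(x) < t$, then some open $U \ni x$ has $\operatorname{diam} f(U\cap A) < t$, and since $U$ is a neighborhood of each of its points we get $\omega_f(x') \le \operatorname{diam} f(U\cap A) < t$ for every $x' \in U$. Hence $\{x : \omega_f(x) < t\}$ is open for each $t$, and therefore $B = \bigcap_{n\ge 1}\{x : \omega_f(x) < 1/n\}$ is a $G_\delta$ set in $X$ containing $A$.

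Next I would build the extension $F$ on $B$ using completeness of $Y$. For $x \in B$ choose $a_k \in A$ with $a_k \to x$, possible because $B \subseteq X = \overline{A}$. Given $\epsilon > 0$, pick an open $U \ni x$ with $\operatorname{diam} f(U\cap A) < \epsilon$; for all large $k$ the points $a_k$ lie in $U$, so $f(a_k)$ is Cauchy and converges in $Y$. I define $F(x)$ to be this limit. The value is independent of the approximating sequence, since any two sequences converging to $x$ eventually land in the same small-diameter set $f(U\cap A)$; and for $x \in A$ the definition returns $f(x)$, so $F$ genuinely extends $f$.

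The step I expect to require the most care is the continuity of $F$ on $B$. Given $x \in B$ and $\epsilon > 0$, fix an open $U \ni x$ with $\operatorname{diam} f(U\cap A) < \epsilon$. For any $x' \in B \cap U$, approximate $F(x)$ and $F(x')$ by values $f(a)$ and $f(a')$ with $a,a' \in A \cap U$ and with $d(F(x),f(a))$, $d(F(x'),f(a'))$ each less than $\epsilon$; since $a,a' \in A\cap U$ we have $d(f(a),f(a')) \le \operatorname{diam} f(U\cap A) < \epsilon$, whence $d(F(x),F(x')) < 3\epsilon$. As $U$ is a neighborhood of $x$, this shows $F$ is continuous at $x$, and the proof is complete.
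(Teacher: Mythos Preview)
The paper does not actually prove this theorem: it is quoted as a classical result with a reference to Engelking's \emph{General Topology}, so there is no argument in the paper to compare against. Your oscillation argument is the standard textbook proof and is essentially correct.

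One small point worth tightening: as stated, the theorem places no countability hypothesis on $X$, yet your construction of $F(x)$ relies on choosing a \emph{sequence} $a_k\to x$ in $A$. In a general space a point of $\overline{A}$ need not be a sequential limit of points of $A$. The fix is to bypass sequences entirely: for $x\in B$ and each $n$ choose an open $U_n\ni x$ with $\operatorname{diam} f(U_n\cap A)<1/n$; the closures $\overline{f(U_n\cap A)}$ in $Y$ form a nested family of nonempty sets with diameters tending to $0$, and completeness of $Y$ gives a unique point in their intersection, which you take as $F(x)$. Continuity then follows exactly as in your last paragraph. For the paper's intended application (Proposition~\ref{G-delta}, where $X$ is a complete metric space) your sequential version is already adequate.
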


\begin{proof}[Proof of Proposition ~\ref{G-delta}]
Consider $A$ as a subspace of its closure $\bar A$. Since $A$ is contractible 
in $X$, there is a map $f: A \rightarrow PX$ satisfying $f(a)(0) = a$. By 
Theorem ~\ref{Lavrentieff}, $f$ can be extended to $F: B \rightarrow PX$, 
where $B$ is a $G_\delta$ set in $\bar A$ containing $A$. Since $B$ is 
clearly also $G_\delta$ in $X$, it only remains to show that it is 
contractible in $X$.

Pick any $b \in B$. Then $b$ is the limit of some sequence 
$\{a_n\}$ in $A$, so $$F(b)(0) = F(\lim_{n\rightarrow \infty}a_n)(0) = 
\lim_{n \rightarrow \infty}F(a_n)(0) = \lim_{n\rightarrow \infty}f(a_n)(0) = 
\lim_{n \rightarrow \infty}a_n = b,$$
so $B$ is contractible in $X$.

\end{proof}

\section{Upper bounds}

We need the following definitions for the Grossman-Whitehead theorem. An {\em
absolute extensor in dimension $k$}, or {\em $AE(k)$}, is a space
$X$ with the property that for every space $Z$ with $\dim Z \leq k$
and closed subset $Y \subset Z$, any map $f: Y \rightarrow X$ can be extended
over all of $Z$. A {\em $k$-connected} or {\em $C^k$} space is a
space whose first $k$ homotopy groups are trivial. A {\em locally
$k$-connected} or {\em $LC^k$} space is a space $X$ with the
property that for every $x \in X$ and neighborhood $U$ of $x$, there
is a neighborhood $V$ with $x \in V \subset U$ such that every map
$f: S^r \rightarrow V$ is nullhomotopic in $U$ for $r\leq k$.

\begin{thm}[Kuratowski] For $k \geq 0$, a metrizable
space $X$ is $AE(k+1)$ for metrizable spaces iff it is $LC^k$ and $C^k$.
\label{Kuratowski}
\end{thm}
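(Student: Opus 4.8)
The plan is to prove the two implications separately, both reducing to a cell-by-cell extension argument over a CW-type decomposition of the domain, and to quote standard facts about the interplay of $LC^k$, $C^k$, and extension of maps from low-dimensional complexes.

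For the direction $AE(k+1) \Rightarrow LC^k$ and $C^k$: To see $C^k$, apply the $AE(k+1)$ property with $Z = D^{r+1}$ (the $(r+1)$-disk, which has dimension $r+1 \le k+1$ for $r \le k$) and $Y = S^r$; any map $S^r \to X$ extends over the disk, so $\pi_r(X) = 0$ for $r \le k$. For $LC^k$, fix $x \in X$ and a neighborhood $U$. Since $X$ is an ANE for metric spaces (this is the content of the $AE(k+1)$ hypothesis restricted to low dimensions, combined with the standard ANR machinery), one first produces a neighborhood into which small spheres map, and then uses the extension property over $(r+1)$-cells with $r \le k$ to contract them inside $U$; here one argues locally, choosing $V$ small enough that the mapping cylinder / cone construction stays inside $U$. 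This direction is largely a formal consequence of the definitions together with \theoref{ANE}-style reasoning.

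For the harder direction $LC^k$ and $C^k \Rightarrow AE(k+1)$: Let $Z$ be metrizable with $\dim Z \le k+1$, $Y \subset Z$ closed, and $f : Y \to X$ a map. The strategy is the classical Dugundji/Kuratowski extension-by-skeleta technique. Choose a locally finite open cover of $Z \setminus Y$ whose nerve $N$ has dimension $\le k+1$ (possible since $\dim Z \le k+1$), together with a canonical map $Z \setminus Y \to |N|$; then extend $f$ over successively higher skeleta of a suitable partition-of-unity complex. Over the $0$-skeleton one uses path-connectedness (which follows from $C^k$, $k \ge 0$); over the $r$-skeleton for $1 \le r \le k$ one uses $\pi_{r-1}(X) = 0$ (from $C^k$, noting the attaching maps land in dimension $r-1 \le k$) together with the $LC^k$ condition to keep the extension continuous as one approaches $Y$; the top $(k+1)$-cells are handled because their boundary spheres $S^k$ map nullhomotopically by $C^k$ again. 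The local $LC^k$ hypothesis is exactly what guarantees that the extensions over cells abutting $Y$ can be chosen to vary continuously, so that the resulting map $F : Z \to X$ is continuous at points of $Y$.

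The main obstacle is this last continuity-at-$Y$ point: the skeleton-by-skeleton extension produces a map on $Z \setminus Y$ with no a priori control near the closed set $Y$, and one must invoke the $LC^k$ property in a quantified, uniform-enough way — shrinking the cover near $Y$ and choosing the cell-extensions within prescribed small neighborhoods — to patch the extension continuously across $Y$. I would organize this by first treating the case $Y = \emptyset$ (or $Z$ a simplicial complex of dimension $\le k+1$), where only $C^k$ is needed, and then handle the relative case by the nerve/shrinking argument, citing \cite{Hu} for the technical ANE extension lemmas rather than reproving them. Alternatively, since the theorem is attributed to Kuratowski and is classical, it is legitimate simply to cite it; but the sketch above is the proof one would write out.
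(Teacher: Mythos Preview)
The paper does not actually prove this theorem: it is stated, attributed to Kuratowski, and used as a black box (in the same way the paper quotes \theoref{ANE} from \cite{Hu} and \theoref{epsilon} from \cite{B}). So there is no proof in the paper to compare your proposal against; your closing remark that ``it is legitimate simply to cite it'' is exactly what the author does.

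That said, your sketch of the substantive direction $LC^k\ \&\ C^k \Rightarrow AE(k+1)$ is the standard Dugundji--Kuratowski nerve-and-skeleta argument and is correct in outline, including your identification of continuity at points of $Y$ as the place where the $LC^k$ hypothesis is genuinely consumed.

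There is, however, a gap in your treatment of the easy direction. You write that ``$X$ is an ANE for metric spaces (this is the content of the $AE(k+1)$ hypothesis restricted to low dimensions, combined with the standard ANR machinery)''; this is not right. The $AE(k+1)$ property only controls extensions from domains of dimension $\le k+1$ and does not by itself make $X$ an ANE for all metric spaces, so you cannot invoke \theoref{ANE}-style reasoning here. The classical way to extract $LC^k$ is instead a direct convergent-cells construction: if $LC^k$ failed at $x$ relative to a neighborhood $U$, one would have maps $f_n:S^r\to B(x,1/n)$ (some fixed $r\le k$) that are not nullhomotopic in $U$; take $Z$ to be a null-sequence of disjoint $(r{+}1)$-disks converging to a point $p$, set $Y=\{p\}\cup\bigcup_n\partial D_n$, and define $f:Y\to X$ by $f|_{\partial D_n}=f_n$, $f(p)=x$. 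Then $\dim Z\le k+1$, and the (neighborhood) extension guaranteed by $AE(k+1)$ forces $F(D_n)\subset U$ for large $n$, contradicting the choice of $f_n$. Your argument for $C^k$ is fine.
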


We use the notation $CX$ for the cone over $X$, $$CX=(X\times [0,1])/(X\times\{1\}).$$

\begin{thm} For $k \geq 0$, let $X$ be an $LC^{k}$ and $C^{k}$ compactum. Then $\cat_{g}X \leq \dim X/(k+1)$.
\begin{proof} Suppose $\dim X = n = p(k+1) + r$, where $0 \leq r < k+1$. We can write $X = \bigcup_{i = 0}^{p(k+1) + r}X_{i}$, where
$\dim X_{i} = 0$ for each $i$. Let $A_{i} = X_{(k+1)i} \cup X_{(k+1)i+1}
\cup... \cup X_{(k+1)(i+1) - 1}$ for $i = 0,...,p-1$, and let $A_{p} =
X_{p(k+1)} \cup... \cup X_{p(k+1)+r}$. Then $\dim A_{i} \leq k$ for each
$i$, so $\dim CA_{i} \leq k+1.$

For each $i$, consider the inclusion map $A_{i} \hookrightarrow X$.
Since $X$ is $k$-connected and locally $k$-connected, $X$ is an
AE$(k+1)$ space, and so the inclusion maps extend over $CA_{i}$ for
each $i$. Hence we have $X = \bigcup_{i=0}^{p} A_{i}$, where each
$A_{i}$ is contractible in $X$, which implies that $\cat_g X \leq p \leq
\dim X/(k+1)$.
\end{proof}
\end{thm}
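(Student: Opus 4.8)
The plan is to establish the bound $\cat_g X \le \dim X/(k+1)$ by decomposing $X$ into a small number of pieces, each of dimension at most $k$, and then using the $AE(k+1)$ property to contract each piece. The starting point is the standard fact from dimension theory that an $n$-dimensional separable metric space can be written as a union of $n+1$ zero-dimensional subsets; this is the decomposition theorem for covering dimension. Writing $\dim X = n = p(k+1)+r$ with $0 \le r < k+1$, I would group these $n+1$ zero-dimensional pieces into blocks of size $k+1$ (with one possibly-smaller leftover block), obtaining sets $A_0, \dots, A_p$ with $X = \bigcup_{i=0}^p A_i$ and $\dim A_i \le k$ by the countable sum theorem for dimension (a finite union of zero-dimensional sets has dimension at most $k$ when there are $k+1$ of them). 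Thus there are $p+1 \le \dim X/(k+1) + 1$ pieces, which is exactly what we want for an upper bound on $\cat_g$.

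The next step is to show each $A_i$ is contractible in $X$. Here I would use that the cone $CA_i$ has $\dim CA_i \le \dim A_i + 1 \le k+1$, together with the observation that $A_i$ sits inside $CA_i$ as the base $A_i \times \{0\}$, which is closed in $CA_i$. By Theorem~\ref{Kuratowski}, since $X$ is $LC^k$ and $C^k$, it is an $AE(k+1)$ for metrizable spaces. Therefore the inclusion $A_i \hookrightarrow X$, regarded as a map from the closed subset $A_i \subset CA_i$ of the at-most-$(k+1)$-dimensional space $CA_i$, extends to a map $CA_i \to X$. Restricting this extension to the cone structure gives a nullhomotopy of the inclusion $A_i \hookrightarrow X$ inside $X$: the cone parameter provides a homotopy from the inclusion to a constant map with values in $X$. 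Hence each $A_i$ is contractible in $X$ in the sense of Definition~\ref{def}.

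Putting this together, $X = \bigcup_{i=0}^p A_i$ is a cover by sets contractible in $X$, so $\cat_g X \le p \le \dim X/(k+1)$, since $p = (n-r)/(k+1) \le n/(k+1)$.

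The main obstacle, such as it is, lies in correctly invoking the dimension-theoretic facts: one must ensure the decomposition into zero-dimensional sets is valid (this requires $X$ to be separable metric, which is automatic for a compactum), and that grouping $k+1$ of them yields dimension $\le k$ via the finite (hence countable) sum theorem. One also needs $\dim CA_i \le \dim A_i + 1$, which follows from the standard inequality $\dim(Y \times [0,1]) \le \dim Y + 1$ and the fact that collapsing a closed subset does not raise dimension, or more directly from the cone being a quotient that is still compact metric of the expected dimension. These are all classical; the conceptual content is simply the interplay between the dimension bound on the pieces and the extension property $AE(k+1)$, both of which are supplied by the hypotheses and Theorem~\ref{Kuratowski}.
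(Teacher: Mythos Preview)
Your proof is correct and follows essentially the same approach as the paper: decompose $X$ into $n+1$ zero-dimensional pieces, group them into $p+1$ blocks of size at most $k+1$ to obtain sets $A_i$ with $\dim A_i \le k$, and then use Kuratowski's theorem to extend each inclusion $A_i \hookrightarrow X$ over the cone $CA_i$. One small terminological note: the fact that a union of $k+1$ zero-dimensional sets has dimension at most $k$ is the decomposition characterization of dimension (or iterated use of $\dim(A\cup B)\le \dim A+\dim B+1$), not the countable sum theorem.
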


\begin{cor}[Grossman-Whitehead Theorem] \label{Whitehead} For a $k$-connected complex $X$,
$$\cat X\le \dim  X/(k+1).$$
\end{cor}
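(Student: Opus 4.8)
The plan is to deduce the Grossman--Whitehead theorem from the immediately preceding theorem, which already gives the bound $\cat_g X \le \dim X/(k+1)$ for $LC^k$ and $C^k$ compacta, together with the theorem proved earlier that $\cat_g X = \cat X$ for compact metric ANRs. The only gap to bridge is that a $k$-connected complex $X$ need not be compact, so I would first reduce to the compact case and then verify the two hypotheses ($LC^k$ and $C^k$) needed to apply the upper-bound theorem.

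First I would address the reduction to compacta. If $\dim X = \infty$ the inequality is vacuous, so assume $n = \dim X < \infty$; then $X$ is homotopy equivalent to a CW complex of dimension at most $n$ (one can collapse to the $n$-skeleton, or invoke that $\cat$ and cup-length are homotopy invariants so we may replace $X$ by its $n$-skeleton). A finite-dimensional complex is still possibly non-compact, but $\cat X$ for a complex is the supremum of $\cat K$ over finite subcomplexes $K$, or one can simply note that the classical Grossman--Whitehead theorem for complexes follows formally once it is known for the finite case by this exhaustion; alternatively, since the paper's real novelty is the $\cat_g$ machinery, I would state the corollary as it stands and observe that for a \emph{finite} $k$-connected complex $X$ we have $\cat X = \cat_g X$ by the ANR theorem (finite complexes are compact metric ANRs), and $\cat_g X \le \dim X/(k+1)$ by the previous theorem, giving the result; the general case then follows by the standard reduction to finite subcomplexes.

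Second I would check the hypotheses of the upper-bound theorem for a (finite) $k$-connected complex $X$. That $X$ is $C^k$ is exactly the hypothesis of $k$-connectedness (its first $k$ homotopy groups vanish). That a CW complex is $LC^k$ — indeed $LC^n$ for all $n$ — is a standard local-contractibility fact for complexes: every point has a neighborhood that deformation retracts onto it, so every sphere mapped into a small enough neighborhood bounds in any prescribed larger neighborhood. With both hypotheses verified, the previous theorem applies to give $\cat_g X \le \dim X/(k+1)$, and the ANR theorem upgrades this to $\cat X \le \dim X/(k+1)$.

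The main obstacle is purely bookkeeping rather than conceptual: making the passage from an arbitrary $k$-connected complex to a compact one precise without circularity, since the ANR equality $\cat_g = \cat$ and the upper-bound theorem are both stated for compacta. I expect the cleanest route is to invoke that $\cat X$ of a CW complex equals $\sup\{\cat K : K \subset X \text{ finite subcomplex}\}$ together with the fact that $\dim K \le \dim X$ for every subcomplex, so the per-subcomplex bound $\cat K \le \dim K/(k+1) \le \dim X/(k+1)$ passes to the supremum — the one subtlety being that finite subcomplexes of a $k$-connected complex need not themselves be $k$-connected, which is handled by thickening $K$ inside $X$ to a finite subcomplex through which the relevant maps factor, exactly as in the classical proof.
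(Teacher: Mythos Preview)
Your proposal is correct and follows exactly the route the paper intends: the paper gives no separate proof for this corollary, treating it as an immediate consequence of the preceding upper-bound theorem for $LC^k$, $C^k$ compacta together with the earlier result that $\cat_g X = \cat X$ for compact metric ANRs. Your verification that a $k$-connected CW complex is $C^k$ (by definition) and $LC^k$ (by local contractibility) is precisely the missing link the paper leaves implicit.

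You are in fact being more careful than the paper about the compactness hypothesis. The paper's machinery (both the upper-bound theorem and the $\cat_g = \cat$ equality) is stated only for compacta, and the parallel Corollary~\ref{Dranish} is explicitly restricted to \emph{finite} CW complexes; the intended reading here is almost certainly the same, so your lengthy reduction from arbitrary complexes to finite subcomplexes---and the attendant worry that subcomplexes need not inherit $k$-connectedness---is extra work the paper does not undertake. For the purposes of matching the paper you may simply take ``complex'' to mean finite CW complex and drop that discussion entirely.
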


\begin{ex}
For the $n$-dimensional Menger space $\mu^n$,
$$
\cat_g \mu^n=1.
$$
\begin{proof} The $n$-dimensional Menger space is $(n-1)$-connected
and $(n-1)$-locally connected ~\cite{Be}, so $\cat_{g}\mu_n \leq 1$.
Since $\mu^n$ is not contractible, $\cat_g \mu_n = 1$.
\end{proof}
\end{ex}

\begin{defin}
i) A family $\{A_{i}\}$ of subsets of $X$ is called an $n$-cover if every
subfamily of $n$ sets forms a cover: $X=A_{i_1}\cup\dots\cup
A_{i_n}$.

ii) Given an open cover $\mathcal U$ of $X$ and a point $x \in X$, the order of $\mathcal U$ at $x$, $\Ord_x\mathcal U$, is the number of elements of $\mathcal U$ that contain $x$.
\end{defin}

We will make use of the following result that appears in ~\cite{Dr2}:

\begin{prop} \label{order} 
A family $\mathcal U$ that consists of $m$ subsets of $X$ is an $(n + 1)$-cover
of $X$ if and only if $\Ord_x \mathcal U \geq m-n$ for all $x \in X$.
\begin{proof} See ~\cite{Dr1, Dr2}
\end{proof}
\end{prop}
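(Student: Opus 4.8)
The plan is to prove Proposition~\ref{order} by a straightforward counting argument, translating the combinatorial condition ``every subfamily of $n+1$ sets covers $X$'' into a pointwise statement about how many members of $\mathcal U$ each point avoids. Write $\mathcal U = \{U_1,\dots,U_m\}$ and, for a fixed $x\in X$, let $M(x) = \{i : x\in U_i\}$ and $N(x) = \{i : x\notin U_i\}$, so that $|M(x)| = \Ord_x\mathcal U$ and $|M(x)| + |N(x)| = m$.

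For the forward direction, suppose $\mathcal U$ is an $(n+1)$-cover but that $\Ord_x\mathcal U < m-n$ for some $x$; then $|N(x)| = m - |M(x)| > n$, so $N(x)$ contains at least $n+1$ indices $i_1,\dots,i_{n+1}$. The subfamily $\{U_{i_1},\dots,U_{i_{n+1}}\}$ then misses the point $x$, contradicting the assumption that every $(n+1)$-element subfamily covers $X$. Hence $\Ord_x\mathcal U \ge m-n$ for all $x$. For the reverse direction, suppose $\Ord_x\mathcal U \ge m-n$ for all $x$, and take any subfamily of $n+1$ sets, say indexed by $S\subset\{1,\dots,m\}$ with $|S| = n+1$. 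For any $x$, the complementary index set $N(x)$ has size $m - |M(x)| \le m - (m-n) = n < |S|$, so $S \not\subset N(x)$; that is, some member of the subfamily indexed by $S$ contains $x$. Since $x$ was arbitrary, the subfamily covers $X$, so $\mathcal U$ is an $(n+1)$-cover.

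Since the statement is a pure pigeonhole equivalence, I do not expect a genuine obstacle; the only thing to be careful about is the off-by-one bookkeeping between ``$n+1$ sets in every subfamily'' and ``order at least $m-n$,'' and the fact that subfamilies should be understood as chosen from the $m$ labelled sets (repetitions being irrelevant). The paper already cites \cite{Dr1, Dr2} for this, so the proof can reasonably be left as the one-line pointer it currently is, but the argument above is the content behind that citation.
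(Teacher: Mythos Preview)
Your argument is correct: the equivalence is exactly the pigeonhole count you give, and your bookkeeping between $|M(x)|=\Ord_x\mathcal U$, $|N(x)|=m-\Ord_x\mathcal U$, and the size $n+1$ of a subfamily is clean in both directions. The paper itself does not supply a proof here but simply refers to \cite{Dr1,Dr2}; what you have written is precisely the elementary counting argument that underlies that citation, so there is nothing further to compare.
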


\begin{thm} ~\cite{Os} \label{0-dim}
For every $m>n$, every $n$-dimensional compactum $X$ admits an
$(n+1)$-cover by $m$ $0$-dimensional sets.
\begin{proof} This result follows from a slight modification to a proof given in ~\cite{Os}.

Since $\dim X \leq n$, $X$ can be decomposed into $0$-dimensional
sets as $X = X_0 \cup...\cup X_n$. We assume that the $X_i$
are $G_{\delta}$ sets ~\cite[Theorem 1.2.14]{E}, and proceed inductively. For any $m > n+1$, suppose an $(n+1)$-cover $\{X_{0},...,X_{m-1}\}$ consisting of $0$-dimensional $G_{\delta}$
sets has been constructed. Let $$X_m = \{x \in X| x \mbox{ lies in
exactly }(m-n) \mbox{ of the } X_i\}.$$ 

Since the $X_0,...,X_{m-1}$ form an $(n+1)$-cover, Proposition ~\ref{order} implies that each $x \in X$ lies in at least $m-n$ of the $X_i$. Then $X_m$ is the complement in $X$ of a finite union of finite intersections of $G_\delta$ sets, and is therefore $F_\sigma$. Similarly, for $0 \leq i \leq m-1$, each $X_m \cap X_i$ is a $0$-dimensional set that is $F_{\sigma}$ in $X$, and therefore in $X_m$. As the finite union of $0$-dimensional $F_\sigma$ sets, $X_m$ is also $0$-dimensional ~\cite[Corollary 1.3.3]{E}. It is also clear from the construction of $X_m$ that $\{X_0,...,X_m\}$ is an $(n+1)$-cover of $X$.
\end{proof}
\end{thm}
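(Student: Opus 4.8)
The plan is to reduce the assertion, via Proposition~\ref{order}, to a quantitative statement about orders and then to build the cover one set at a time. By Proposition~\ref{order}, a family of $\ell$ subsets of $X$ is an $(n+1)$-cover exactly when every point of $X$ lies in at least $\ell-n$ of its members, so it suffices to produce, for each $\ell\ge n+1$, a family of $\ell$ zero-dimensional subsets of $X$ whose order is everywhere at least $\ell-n$. For $\ell=n+1$ this is the decomposition theorem of dimension theory: since $\dim X\le n$ one can write $X=X_0\cup\dots\cup X_n$ with each $X_i$ zero-dimensional, and, using that dimension can be lifted to $G_{\delta}$-sets, one may take the $X_i$ to be $G_{\delta}$ in $X$ \cite{E}; such a family covers $X$, hence has order at least $1=(n+1)-n$ everywhere.

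The inductive step passes from a family $\{X_0,\dots,X_{m-1}\}$ of $m$ zero-dimensional $G_{\delta}$ sets of order $\ge m-n$ to one of $m+1$ such sets of order $\ge m-n+1$. The idea is to adjoin a single new zero-dimensional set that absorbs exactly the points where the order is still only $m-n$; concretely one takes something like
$$X_m=\{x\in X:\ x\text{ lies in exactly }m-n\text{ of }X_0,\dots,X_{m-1}\}.$$
A point already lying in $\ge m-n+1$ of the old sets automatically lies in $\ge(m+1)-n$ sets of the enlarged family, while a point of minimal order $m-n$ has been placed into $X_m$ and so also lies in $(m+1)-n$ sets; hence, by Proposition~\ref{order}, $\{X_0,\dots,X_m\}$ is an $(n+1)$-cover by $m+1$ sets. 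This combinatorial verification is routine.

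The substantive work --- and the step I expect to be the main obstacle --- is keeping the newly built set zero-dimensional while retaining enough control of its Borel type for the induction to proceed. A finite union of zero-dimensional subsets need not be zero-dimensional, so to deduce $\dim X_m\le 0$ from the decomposition $X_m=\bigcup_{i=0}^{m-1}(X_m\cap X_i)$ into subsets of the zero-dimensional sets $X_i$ one must invoke the countable sum theorem for dimension \cite{E}, which requires the pieces $X_m\cap X_i$ to be $F_{\sigma}$. This is where the $G_{\delta}$ hypothesis is used: for each $k$ the set of points lying in at least $k$ of the $X_i$ is a finite union of finite intersections of $G_{\delta}$ sets, hence $G_{\delta}$; since the old family already has order $\ge m-n$, $X_m$ is the complement of such a set and is therefore $F_{\sigma}$ in $X$, and one extracts in the same spirit the $F_{\sigma}$ descriptions of the pieces $X_m\cap X_i$ needed to apply the countable sum theorem. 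To feed the next stage one then replaces $X_m$ by a zero-dimensional $G_{\delta}$ set containing it, which only enlarges the family and so preserves the order bound. Carrying this Borel-class accounting through the induction cleanly --- and arranging the base cases so that the constructed sets genuinely come out zero-dimensional --- is the delicate point; the combinatorics, by comparison, is just the computation above.
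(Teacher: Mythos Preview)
Your argument is correct and follows the same route as the paper: translate the $(n+1)$-cover condition via Proposition~\ref{order}, start from the decomposition of $X$ into zero-dimensional $G_\delta$ sets, and inductively adjoin $X_m=\{x:\ x\text{ lies in exactly }m-n\text{ of the }X_i\}$, using the $G_\delta$/$F_\sigma$ bookkeeping together with the countable sum theorem to keep $X_m$ zero-dimensional. Your explicit remark about enlarging $X_m$ to a zero-dimensional $G_\delta$ set before the next inductive step is a point the paper leaves implicit.
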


\begin{cor}\label{1-dim}
For every $m>\left\lfloor n/2\right\rfloor$ every $n$-dimensional
compactum admits an $(\left\lfloor n/2\right\rfloor+1)$-cover by $m$
$1$-dimensional sets.
\begin{proof} Decompose $X$ into $0$-dimensional sets $X_{0},...,X_{n}$
as before, and group these into pairs to get (at most) $1$-dimensional
$G_{\delta}$ sets $Y_0,..,Y_{\left\lfloor n/2\right\rfloor}$ that cover $X$.

We proceed by induction again. For any $m > \left\lfloor n/2\right\rfloor$, suppose an
$(\left\lfloor n/2\right\rfloor+1)$-cover $\{Y_0,..,Y_{m-1}\}$
consisting of $1$-dimensional $G_{\delta}$ sets has been
constructed. Let $Y_m$ be the $F_\sigma$ set $\{x \in X| x \mbox{
lies in exactly }(m-\left\lfloor n/2\right\rfloor ) \mbox{ of the }
Y_i\}$. Each $Y_m \cap Y_i$ is $F_{\sigma}$ of dimension $\leq 1$,
so $\dim Y_m \leq 1$ ~\cite[Theorem 1.5.3]{E}, and $\{Y_0,...,Y_m\}$ is the desired
$(\left\lfloor n/2\right\rfloor+1)$-cover of $X$.
\end{proof}
\end{cor}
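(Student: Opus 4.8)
The plan is to mimic the inductive proof of Theorem~\ref{0-dim}, but working with $1$-dimensional pieces obtained by pairing up the $0$-dimensional sets in a decomposition of $X$, and using the corresponding dimension-theoretic facts about $F_\sigma$ sets of dimension $\le 1$ in place of the $0$-dimensional ones. First I would start from the standard decomposition $X = X_0 \cup \dots \cup X_n$ into $0$-dimensional $G_\delta$ sets (as in the proof of Theorem~\ref{0-dim}), and group consecutive pairs: set $Y_j = X_{2j} \cup X_{2j+1}$ for $0 \le j < \lfloor n/2\rfloor$, with the last $Y_{\lfloor n/2\rfloor}$ absorbing any leftover $X_i$. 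Since a union of two $0$-dimensional $G_\delta$ sets is a $G_\delta$ set of dimension $\le 1$ (by \cite[Corollary 1.3.3]{E} for the dimension and closure under finite unions for the $G_\delta$ property), the $Y_0, \dots, Y_{\lfloor n/2\rfloor}$ form a cover of $X$ by $1$-dimensional $G_\delta$ sets, and there are $\lfloor n/2\rfloor + 1$ of them, so trivially they form a $(\lfloor n/2\rfloor + 1)$-cover.

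Next comes the inductive step, identical in structure to that of Theorem~\ref{0-dim} with $n$ replaced by $\lfloor n/2\rfloor$. Assuming a $(\lfloor n/2\rfloor + 1)$-cover $\{Y_0, \dots, Y_{m-1}\}$ by $1$-dimensional $G_\delta$ sets has been built for some $m > \lfloor n/2\rfloor$, Proposition~\ref{order} guarantees every $x \in X$ lies in at least $m - \lfloor n/2\rfloor$ of the $Y_i$, so the set $Y_m = \{x \in X \mid x \text{ lies in exactly } (m - \lfloor n/2\rfloor) \text{ of the } Y_i\}$, being the complement of a finite union of finite intersections of $G_\delta$ sets, is $F_\sigma$ in $X$. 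Each $Y_m \cap Y_i$ ($0 \le i \le m-1$) is then an $F_\sigma$ subset of $X$ of dimension $\le 1$; since $Y_m$ is covered by these finitely many $F_\sigma$ sets of dimension $\le 1$, the countable sum theorem for dimension — here \cite[Theorem 1.5.3]{E} applied to the $F_\sigma$ situation — gives $\dim Y_m \le 1$. Finally, by the construction of $Y_m$, every point now lies in at least $m - \lfloor n/2\rfloor + 1 = (m+1) - \lfloor n/2\rfloor$ of the sets $Y_0, \dots, Y_m$, so Proposition~\ref{order} shows $\{Y_0, \dots, Y_m\}$ is a $(\lfloor n/2\rfloor + 1)$-cover, completing the induction.

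The only genuine subtlety — and the place I would be most careful — is the dimension bookkeeping: one must make sure that at each stage the newly added set $Y_m$ is $F_\sigma$ (not merely $G_\delta$), that $Y_m \cap Y_i$ is $F_\sigma$ \emph{in $X$} and hence $F_\sigma$ \emph{in $Y_m$}, and that the decomposition sum theorem is being applied in a form valid for $F_\sigma$ (rather than closed) subsets. All of these are exactly the points handled in the proof of Theorem~\ref{0-dim}, and the pairing trick changes nothing essential; the bound $\lfloor n/2\rfloor$ simply reflects that two $0$-dimensional factors raise the dimension by at most $1$. No obstacle beyond this routine verification is anticipated.
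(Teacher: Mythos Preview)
Your proposal is correct and follows essentially the same route as the paper: pair up the $0$-dimensional $G_\delta$ pieces from Theorem~\ref{0-dim} into $\lfloor n/2\rfloor + 1$ sets of dimension $\le 1$, then rerun the Ostrand-style induction with $\lfloor n/2\rfloor$ in place of $n$, using \cite[Theorem~1.5.3]{E} for $\dim Y_m \le 1$. Your version is in fact slightly more explicit than the paper's in spelling out the role of Proposition~\ref{order} and the $F_\sigma$ bookkeeping, but the argument is the same.
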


The following lemma can be traced back to Kolmogorov (see ~\cite{Os},~\cite[Proof ofTheorem 3.2]{Dr2}).
\begin{lem}\label{KO}
Let $A_0,\dots, A_{m+n}$ be an $(n+1)$-cover of $X$ and $B_0,\dots,
B_{m+n}$ an $(m+1)$-cover of $Y$. Then $A_0\times B_0,\dots,
A_{m+n}\times B_{m+n}$ is a cover of $X\times Y$.
\end{lem}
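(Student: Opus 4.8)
The plan is to fix an arbitrary point $(x,y)\in X\times Y$ and produce an index $i\in\{0,\dots,m+n\}$ with $x\in A_i$ and $y\in B_i$; this shows $(x,y)\in A_i\times B_i$, which is exactly what the conclusion demands. First I would translate the $n$-cover hypothesis into a statement about how often $x$ fails to be covered: since $\{A_0,\dots,A_{m+n}\}$ is an $(n+1)$-cover by $m+n+1$ sets, Proposition~\ref{order} tells us that $\Ord_x\{A_j\} \geq (m+n+1)-n = m+1$, i.e. $x$ lies in \emph{at least} $m+1$ of the sets $A_j$. Equivalently, the set $S_x = \{\,j : x\notin A_j\,\}$ has at most $n$ elements. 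Symmetrically, $T_y = \{\,j : y\notin B_j\,\}$ has at most $m$ elements.

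Now the combinatorics finish it: $S_x\cup T_y$ has at most $n+m$ elements, while the index set $\{0,1,\dots,m+n\}$ has $m+n+1$ elements. Hence there exists an index $i$ lying in neither $S_x$ nor $T_y$, which means $x\in A_i$ and $y\in B_i$, so $(x,y)\in A_i\times B_i$. Since $(x,y)$ was arbitrary, $\bigcup_{i=0}^{m+n}(A_i\times B_i) = X\times Y$.

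There is no real obstacle here — the only point requiring a little care is getting the arithmetic of the indices right (that an $(n+1)$-cover by exactly $m+n+1$ sets forces membership in at least $m+1$ of them, via Proposition~\ref{order} with ``$m$'' there equal to $m+n+1$ and ``$n$'' there equal to $n$), and then checking that $|S_x| + |T_y| \leq n + m < m+n+1 = |\{0,\dots,m+n\}|$ so that a common good index must exist by the pigeonhole principle. One could alternatively phrase the whole argument directly in terms of orders: $\Ord_{(x,y)}\{A_j\times B_j\} \geq \Ord_x\{A_j\} + \Ord_y\{B_j\} - (m+n+1) \geq (m+1)+(n+1)-(m+n+1) = 1$, which again says $(x,y)$ lies in at least one $A_i\times B_i$; but the pigeonhole phrasing is the most transparent.
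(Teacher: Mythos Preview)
Your argument is correct. The paper itself does not supply a proof of Lemma~\ref{KO}; it merely attributes the result to Kolmogorov and points to~\cite{Os} and~\cite[Proof of Theorem 3.2]{Dr2}. What you wrote is precisely the standard pigeonhole argument found in those references: translate the $(n+1)$- and $(m+1)$-cover hypotheses via Proposition~\ref{order} into the order bounds $\Ord_x\{A_j\}\ge m+1$ and $\Ord_y\{B_j\}\ge n+1$, and observe that the complements of these index sets together have at most $n+m<m+n+1$ elements, leaving a common index. Your alternative phrasing via $\Ord_{(x,y)}\{A_j\times B_j\}\ge (m+1)+(n+1)-(m+n+1)=1$ is also fine and is essentially the same computation.
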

We recall that the geometric dimension $gd(\pi)$ of a group $\pi$ is
defined as the minimum dimension of all Eilenberg-Maclane complexes
$K(\pi,1)$. It is known that $gd(\pi)$ coincides with the
cohomological dimension of the group, $cd(\pi)$, for all groups with
$gd(\pi)\ne 3$~\cite{Br}. The Eilenberg-Ganea conjecture asserts
that the equality $gd(\pi)=cd(\pi)$ holds true for all groups $\pi$.

The following theorem was proven by Dranishnikov ~\cite{Dr2} for CW
complexes. We present here a new short proof based on his idea to
use the general LS-category.
\begin{thm}
Let $X$ be a semi-locally simply connected Peano continuum. Then
$$
\cat_{g}X\leq gd(\pi_1(X))+\frac{\dim X}{2}.
$$
\end{thm}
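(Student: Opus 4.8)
The plan is to mimic the strategy used in the Grossman--Whitehead argument above: decompose $X$ into subsets, each of which is contractible in $X$ because it admits a suitable extension of its inclusion map. Write $g=gd(\pi_1(X))$ and let $K=K(\pi_1(X),1)$ be an Eilenberg--MacLane complex of dimension $g$. Since $X$ is a Peano continuum that is semi-locally simply connected, the classifying map $\phi\colon X\to K$ for $\pi_1(X)$ exists and is well-defined up to homotopy; a subset $A\subset X$ on which $\phi|_A$ is nullhomotopic and whose inclusion $A\hookrightarrow X$ lifts to the universal cover $\widetilde X$ after a nullhomotopy — equivalently, on which $\pi_1(A)\to\pi_1(X)$ is trivial in the appropriate sense — will be contractible in $X$ provided $\widetilde X$ is sufficiently connected on the piece in question. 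First I would use Corollary~\ref{1-dim} to obtain, for a suitable $m$, an $(\lfloor \dim X/2\rfloor + 1)$-cover of $X$ by $1$-dimensional sets $Y_0,\dots,Y_{m-1}$.

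The key combinatorial input is Lemma~\ref{KO}: the plan is to combine the $1$-dimensional cover of $X$ from Corollary~\ref{1-dim} with a cover of $K$ coming from its CW structure. Since $\dim K = g$, the complex $K$ can itself be covered by $g+1$ contractible-in-$K$ subsets (its LS-category is at most $\dim K$), and more to the point, one can arrange a $1$-cover of $K$ by $g+1$ sets each of which is a disjoint union of open cells, hence contractible in $K$ (actually nullhomotopic). Feeding an $(\lfloor \dim X/2\rfloor+1)$-cover of $X$ by $1$-dimensional sets and a $1$-cover of $K$ by $g+1$ contractible sets into Lemma~\ref{KO}, with both families reindexed to have the common length $g + \lfloor \dim X/2\rfloor + 1$, produces a cover $\{A_j\}$ of $X$ (pulled back along the graph $X\to X\times K$, $x\mapsto (x,\phi(x))$) consisting of $g+\lfloor\dim X/2\rfloor+1$ sets. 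Each $A_j = Y_{i(j)}\cap \phi^{-1}(L_{i'(j)})$, where $Y_{i(j)}$ is $1$-dimensional and $L_{i'(j)}$ is a subcomplex of $K$ that is nullhomotopic in $K$.

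The heart of the proof is then showing each such $A_j$ is contractible in $X$. On $A_j$ the composition $A_j\hookrightarrow X\xrightarrow{\phi} K$ factors (up to homotopy) through $L_{i'(j)}$, hence is nullhomotopic; therefore the inclusion $A_j\hookrightarrow X$ lifts to a map $A_j\to\widetilde X$ into the universal cover. Since $\widetilde X$ is simply connected and, crucially, $A_j$ is at most $1$-dimensional, I would invoke a connectivity/extension argument: a map from a $1$-dimensional compactum into a simply connected space need not be nullhomotopic in general, so here one must use that $\widetilde X$ is itself locally $1$-connected and $1$-connected (inherited from $X$ being an $LC^0$ Peano continuum), making it $AE(1)$ for $1$-dimensional compacta by Theorem~\ref{Kuratowski} — wait, that requires $C^1$, which $\widetilde X$ need not satisfy. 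The correct route, and the step I expect to be the main obstacle, is instead to cone off: $\dim CA_j\le 2$, and one needs the lifted inclusion $A_j\to\widetilde X$ to extend over $CA_j$. This is where Dranishnikov's idea enters — one does not contract all of $A_j$ at once in $\widetilde X$, but rather uses the $1$-dimensionality to kill $\pi_1$ (automatic, since $A_j\to\widetilde X$) and then handles the remaining obstruction, which lives in $H^2(CA_j,A_j;\pi_2(\widetilde X))$, via the fact that $CA_j$ is at most $2$-dimensional together with a dimension-theoretic vanishing (Čech cohomology of a compactum of covering dimension $\le 2$ with the relevant coefficients). I would assemble this: each $A_j$ is contractible in $X$, so $\cat_g X\le g+\lfloor\dim X/2\rfloor+1-1 = gd(\pi_1(X))+\lfloor\dim X/2\rfloor\le gd(\pi_1(X))+\dim X/2$, completing the proof. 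The delicate points to get right are the precise coefficient group and dimension bookkeeping in the obstruction-theoretic extension over the cone, and verifying that the pulled-back family along the graph map is genuinely a cover of $X$ — the latter being exactly Lemma~\ref{KO} applied correctly.
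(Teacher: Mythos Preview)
Your high-level strategy --- combine a $(\lfloor n/2\rfloor+1)$-cover of $X$ by $1$-dimensional sets with a $(g+1)$-cover of $K=K(\pi,1)$, apply Lemma~\ref{KO}, and then contract each resulting $1$-dimensional piece using the simple connectivity of $\widetilde X$ --- is exactly the paper's, and pulling the product cover back along the graph of the classifying map is a legitimate way to read Lemma~\ref{KO}. The paper packages the geometry differently, though: rather than lifting each $A_j\hookrightarrow X$ to $\widetilde X$ directly, it passes to the Borel construction $\widetilde X\times_\pi E$, uses that the projection $p_1$ to $X$ has a section (since $E$ is contractible) to get $\cat_g X\le\cat_g(\widetilde X\times_\pi E)$, pulls the cover $\{A_i\times B_i\}$ of $X\times K$ back along the natural map $\psi:\widetilde X\times_\pi E\to X\times K$, and then uses the fibration $p_2:\widetilde X\times_\pi E\to K$ and the homotopy lifting property to homotope each piece into a fiber $\cong\widetilde X$. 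This replaces your lifting step (which can be justified via pullback of coverings along homotopic maps, but which you do not argue) with a clean fibration argument.

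Two steps in your plan are genuinely broken as written. (i) The cover of $K$: Lemma~\ref{KO} needs a $(g+1)$-cover of $K$ by $g+\lfloor n/2\rfloor+1$ sets, and taking $g+1$ contractible-in-$K$ sets and ``reindexing'' to a longer list does not give this --- any $g+1$ of the reindexed sets might all be copies of the same $L_i$ and fail to cover. The paper instead invokes Theorem~\ref{0-dim} to produce a $(g+1)$-cover of $K$ by arbitrarily many \emph{zero}-dimensional sets; these are automatically contractible in the CW complex $K$, and their $0$-dimensionality is also what keeps each $A_i\times B_i$ (and hence its $\psi$-preimage) one-dimensional. (ii) The contracting step: your objection to Theorem~\ref{Kuratowski} is mistaken --- $\widetilde X$ \emph{is} $C^1$, being simply connected by construction --- and you need $AE(2)$, not $AE(1)$, since it is the $2$-dimensional cone $CA_j$ over which you must extend. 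The paper simply invokes Theorem~\ref{Kuratowski} here. Your obstruction-theoretic alternative does not go through: there is no CW or Postnikov structure on $\widetilde X$ to run it against, and the group you name, $H^2(CA_j,A_j;\pi_2(\widetilde X))\cong \widetilde H^1(A_j;\pi_2(\widetilde X))$, has no reason to vanish for a general $1$-dimensional $A_j$.
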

\begin{proof}
The conditions imply that $X$ has the universal covering space
$p:\tilde X\to X$. Let $\pi=\pi_1(X)$ and let $q:E\to K(\pi,1)$ be
the universal cover. Note that the orbit space $\tilde X\times_{\pi}E$
under the diagonal action of $\pi$ on $\tilde X\times E$ has the
projections $p_1:\tilde X\times_{\pi}E\to X$ and $p_2:\tilde
X\times_{\pi}E\to K(\pi,1)$ which are locally trivial bundles. Since
$E$ is contractible, $p_1$ admits a section $s$, so by Proposition ~\ref{properties},
$\cat_gX\le\cat_g(\tilde X\times_{\pi}E)$.

The projection $p\times q:\tilde X\times E\to X\times K(\pi,1)$ is
the projection onto the orbit space of the action of the group
$\pi\times\pi$. Therefore, it factors through the projection
$\xi:\tilde X\times E\to \tilde X\times_{\pi}E$ of the orbit action
of the diagonal subgroup $\pi\subset\pi\times\pi$, $p\times
q=\psi\circ\xi$ as follows:
$$
\tilde X\times E\stackrel{\xi}{\longrightarrow} \tilde X\times_{\pi}E\stackrel{\psi}{\longrightarrow} X\times K(\pi,1).
$$

Let $\dim X=n$ and $\dim(K(\pi,1))=gd(\pi)=m$. We apply
{Corollary~\ref{1-dim}} to $X$ and {Theorem~\ref{0-dim}} to
$K(\pi,1)$ to obtain an $(\left\lfloor n/2\right\rfloor+1)$-cover
$A_0,\dots, A_r$ of $X$ by 1-dimensional sets and an $(m+1)$-cover
$B_0,\dots, B_r$ of $K(\pi,1)$ by $0$-dimensional sets, for
$r=m+\left\lfloor n/2\right\rfloor$. By Lemma~\ref{KO}, $A_0\times
B_0,\dots, A_r\times B_r$ is a cover of $X\times K(\pi,1)$ by
$1$-dimensional sets.

If $f: X \rightarrow Y$ is an open surjection between metric separable
spaces such that the fiber $f^{-1}(y)$ is discrete for each $y \in
Y$, then $\dim X = \dim Y$ ~\cite{E}. The map $\psi \circ \xi =
(p,q)$ is an open map, each fiber of which is discrete. An open set
in $\tilde X \times_{\pi} E$ is taken to an open set in $\tilde X
\times E$ by $\xi^{-1}$, which is taken to an open set in $X \times
K(\pi,1)$ by $\psi \circ \xi$, and so, using the surjectivity of
$\xi$, we can say that $\psi$ is open. Similarly, the image under
$\xi$ of any discrete set is still discrete, so $\psi^{-1}(y)$ is
discrete for every $y \in X \times K(\pi,1)$. It follows that
$\{\psi^{-1}(A_i\times B_i)\}_{i=0}^r$ is a cover of $\tilde
X\times_{\pi}E$ by $1$-dimensional sets.

We show that each $\psi^{-1}(A_i\times B_i)$ is contractible in
$\tilde X\times_{\pi}E$. This will imply that $$\cat_g(\tilde
X\times_{\pi}E)\le gd(\pi)+\left\lfloor n/2\right\rfloor \le gd(\pi)
+ \frac{\dim X}{2}.$$ Note that $p_2(\psi^{-1}(A_i\times
B_i))=B_i$. Since $B_i$ is contractible to a point in $K(\pi,1)$,
the set $\psi^{-1}(A_i\times B_i)$ can be homotoped to a fiber
$p_2^{-1}(x_0)\cong\tilde X$. Since $\tilde X$ is a simply
connected and each $\psi^{-1}(A_i \times B_i)$ is $1$-dimensional,
Theorem ~\ref{Kuratowski} implies that the inclusion map of
each subspace can be extended over its cone, and so
each $\psi^{-1}(A_i\times B_i)$ can be contracted to a point
in $\tilde X\times_{\pi}E$ .
\end{proof}
\begin{cor}[Dranishnikov's Theorem]\label{Dranish} For a finite CW complex $X$,
$$
\cat X\leq gd(\pi_1(X))+\frac{\dim X}{2}.
$$
\end{cor}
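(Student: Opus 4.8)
The plan is to deduce Corollary~\ref{Dranish} directly from the preceding theorem, which establishes the inequality $\cat_g X \le gd(\pi_1(X)) + \dim X/2$ for semi-locally simply connected Peano continua. First I would observe that a finite CW complex $X$ is a compact metric ANR: finite CW complexes are ANRs by standard theory, and they are certainly compact and metrizable. Moreover, a finite CW complex is path connected (assuming, as is implicit, that $X$ is connected), locally path connected, and semi-locally simply connected (indeed it has a universal cover). Hence $X$ is a semi-locally simply connected Peano continuum, and the hypotheses of the theorem are satisfied.

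Next I would apply the theorem to conclude $\cat_g X \le gd(\pi_1(X)) + \dim X/2$. Then, since $X$ is a compact metric ANR, the earlier result that $\cat_g X = \cat X$ for compact metric ANRs applies verbatim, giving
$$
\cat X = \cat_g X \le gd(\pi_1(X)) + \frac{\dim X}{2},
$$
which is exactly the claimed inequality. One small point to address: for a finite CW complex, the covering-space dimension $\dim X$ used in the theorem agrees with the CW-dimension, so there is no ambiguity in the statement; and $gd(\pi_1(X))$ is finite whenever $\pi_1(X)$ has finite geometric dimension, while if $gd(\pi_1(X)) = \infty$ the inequality is vacuous.

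There is essentially no obstacle here — the corollary is a formal consequence of the two main theorems (the ANR identification $\cat_g = \cat$ and the general bound) together with the standard fact that finite CW complexes are compact metric ANRs satisfying the Peano and semi-local simple connectivity hypotheses. The only thing worth spelling out is the verification of those hypotheses for finite CW complexes, and the remark that the dimension notions coincide; everything else is a direct substitution.
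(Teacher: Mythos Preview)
Your proposal is correct and matches the paper's intended approach: the corollary is stated without proof immediately after the theorem, and the introduction makes explicit that it follows by combining the bound $\cat_g X \le gd(\pi_1(X)) + \dim X/2$ for semi-locally simply connected Peano continua with the identification $\cat_g X = \cat X$ for compact metric ANRs, exactly as you outline.
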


\section{Lower bounds}

\begin{defin} Let $R$ be a commutative ring. The $R$-cup-length $\cuplength_R X$
of a space $X$ is the smallest integer $k$ such that all cup-products
of length $k+1$ vanish in the \v Cech cohomology ring $\tilde{H}^{*}(X;R)$.
\end{defin}

\begin{thm}\label{cup}
Let $X$ be a compactum with $\cat_g X \leq m$. Then $$\cuplength_R X\le m$$  for any ring  $R$.
\begin{proof} Let $\{A_i\}_{i=0}^{m}$ be as in {Definition ~\ref{def}}.
Assume the contrary, i.e., that there exists a non-zero product
$\alpha_{0} \smile \alpha_{1} \smile...\smile \alpha_{m}$ in
$\tilde{H}^*(X;R)$, where $\alpha_i \in H^{k_i}(X;R)$,
$k_i>0$. For each $i$, there exists a function $f_i: X \rightarrow
K(R,k_i)$ such that $\alpha_{i}$ belongs to the homotopy class
$[f_i]$ (in view of the isomorphism between the group of homotopy
classes $[X,K(R,n)]$ and the \v Cech cohomology group $H^n(X;R)$
~\cite{Sp}, and the fact that the \v Cech cohomology agrees with the
Alexander-Spanier cohomology). Since $X$ is compact, for each $i$
there is a finite subcomplex $K_i\subset K(R,k_i)$ such that
$f_i(X)\subset K_i$. Since each $A_i$ is contractible in $X$,
$f_i|_{A_i}:A_i\to K_i$ is nullhomotopic. By Lemma~\ref{extn}, there exists an open neighborhood $U_i$ of
$A_i$ such that $f_i|_{U_i}:U_i\to K_i$ is also nullhomotopic.

Let $j_i$ be the inclusion $U_i \rightarrow X$, and $q_i$ the
map $X \rightarrow (X,U_i)$. We consider the exact sequence
of the pair $(X,U_i)$ in the Alexander-Spanier  cohomology
(see~\cite{Sp}, p. 308-309):
\begin{gather*}\begin{CD} ...\rightarrow H^{k_{i}}(X,U_{i};R) @>q^{*}_{i}>> H^{k_{i}}(X;R) @>j^{*}_{i}>> H^{k_{i}}(U_{i};R) \rightarrow...\end{CD}.\end{gather*}

Using, once more, the fact that the Alexander-Spanier cohomology
coincides with the \v Cech cohomology on $X$ and $U_i$ and is,
therefore, representable, we have, for each $\alpha_{i}$,
$j^*_i(\alpha_i) = [f_i \circ j_i] = 0$. By exactness, there
is an element $\bar{\alpha_i} \in H^{k_i}(X,U_i;R)$ satisfying
$q^{*}_i(\bar{\alpha_i}) = \alpha_i$. The rest of the proof
goes in the same vein as in the case of a CW complex~\cite{CLOT}.
Namely, in view of the cup-product formula for the Alexander-Spanier
cohomology (see ~\cite{Sp}, pp. 315),
$$
H^k(X,U;R)\times H^l(X,V;R)\stackrel{\cup}{\longrightarrow} H^{k+l}(X,U\cup V;R),
$$
and the fact that $H^n(X,X;R)=0$, we obtain a contradiction.
\end{proof}

\end{thm}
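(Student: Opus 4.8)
The plan is to argue by contradiction, imitating the classical proof that cup-length is a lower bound for LS-category (as in \cite{CLOT}), but systematically replacing singular cohomology of a CW complex by \v Cech cohomology and using the extension lemma (Lemma~\ref{extn}) to make up for the fact that the sets $A_i$ in the definition of $\cat_g$ need not be open. So suppose $\cuplength_R X > m$: there are classes $\alpha_i \in H^{k_i}(X;R)$ with $k_i > 0$ whose product $\alpha_0 \smile \cdots \smile \alpha_m$ is nonzero in $\tilde H^{*}(X;R)$, and fix a decomposition $X = \bigcup_{i=0}^{m} A_i$ with each $A_i$ contractible in $X$.

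First I would represent each class by a map. Since on a compactum \v Cech cohomology coincides with Alexander--Spanier cohomology, and the latter is representable, we may write $\alpha_i = [f_i]$ for a map $f_i : X \to K(R,k_i)$; compactness of $X$ lets us assume that $f_i(X)$ lies in a finite subcomplex $K_i \subset K(R,k_i)$, which is a compact metric ANR. Because $A_i$ is contractible in $X$, the restriction $f_i|_{A_i} : A_i \to K_i$ is nullhomotopic, so Lemma~\ref{extn} applied to $f_i$ and $A_i$ produces an open set $U_i \supset A_i$ on which $f_i$ is still nullhomotopic. This is exactly the step where general subsets are traded for open ones.

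Next I would feed this into the long exact cohomology sequence of the pair $(X,U_i)$ in Alexander--Spanier cohomology. Nullhomotopy of $f_i|_{U_i}$ gives $j_i^{*}\alpha_i = 0$, where $j_i : U_i \hookrightarrow X$ is the inclusion, so by exactness there is a class $\bar\alpha_i \in H^{k_i}(X,U_i;R)$ restricting to $\alpha_i$. Using the relative cup product for Alexander--Spanier cohomology, $H^{k}(X,U;R) \otimes H^{l}(X,V;R) \to H^{k+l}(X, U\cup V; R)$, I form $\bar\alpha_0 \smile \cdots \smile \bar\alpha_m \in H^{\sum k_i}(X, U_0 \cup \cdots \cup U_m; R)$. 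Since the $A_i$ cover $X$, so do the $U_i$, and this target group is $H^{\sum k_i}(X,X;R) = 0$. Finally, naturality of the restriction from relative to absolute cohomology sends $\bar\alpha_0 \smile \cdots \smile \bar\alpha_m$ to $\alpha_0 \smile \cdots \smile \alpha_m$, forcing the latter to vanish — contradiction.

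The main obstacle is precisely that $X$ is only assumed to be a compactum, not an ANR: there is no direct way to fatten a set contractible in $X$ to an open one, and singular cohomology is unreliable. The proof therefore rests on two technical points that must be invoked with care rather than on the formal diagram chase (which is routine): funneling each cohomology class through a \emph{finite} complex $K_i$ so that the ANR hypothesis of Lemma~\ref{extn} is met, and the compatibility of \v Cech with Alexander--Spanier cohomology on compacta together with the existence of a well-behaved relative cup product in the latter theory.
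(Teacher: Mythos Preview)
Your proposal is correct and follows essentially the same route as the paper: represent each $\alpha_i$ by a map into a finite subcomplex $K_i\subset K(R,k_i)$ so that Lemma~\ref{extn} applies, thicken each $A_i$ to an open $U_i$ on which $f_i$ is still nullhomotopic, lift to relative Alexander--Spanier classes via the exact sequence of $(X,U_i)$, and finish with the relative cup product landing in $H^*(X,X;R)=0$. You have also correctly isolated the two delicate points (passage to a compact ANR target, and the use of Alexander--Spanier/\v Cech cohomology with its relative cup product), which is exactly where the paper places the emphasis.
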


\begin{ex}
Let $\Pi_2$ denote a Pontryagin surface constructed from the 2-sphere for the prime $2$. Then $\cat_g \Pi_2=2$.
\end{ex}
\begin{proof}
We recall that $\Pi_2$ is the inverse limit of a sequence~\cite{Ku},~\cite{Dr4},
$$
L_1 \stackrel{p_2}\longleftarrow L_2\stackrel{p_3}\longleftarrow L_3\stackrel{p_4}\longleftarrow\dots
$$
where $L_1=S^2$ with a fixed triangulation, each simplicial complex
$L_{i+1}$ is obtained from $L_i$ by  replacing every 2-simplex in
the barycentric subdivision by the (triangulated) M\"obius band, and
the bonding map $p_i$ sends this M\"obius band back to the
simplex. Thus each $L_i$, $i>1$, is a non-orientable surface. It is well known that there
is an $\alpha\in H^1(L_2; \Z_2)$ with $\alpha \smile \alpha \neq 0$ ~\cite{H}.
Since for $i>1$, $(p_i)_* : H_1(L_i;\Z_2) \rightarrow H_1(L_{i-1};\Z_2)$ is an isomorphism, this cup-product survives to the limit. Thus,  $\cuplength _{\Z_2} \Pi_2 > 1$. Since we also have
$\cat_g \Pi_2 \leq \dim \Pi_2 = 2$, we must have $\cat_g \Pi_2 = 2$.
\end{proof}
\begin{question}
Let $D_2$ be a Pontryagin surface constructed from the 2-disk. What is $\cat_g D_2$ ?
\end{question}
\begin{remark}
The above computation works for all Pontryagin surfaces $\Pi_p$ constructed from the $2$-sphere, where $p$ is any prime number.
The cup-length estimate in the case $p\ne 2$ requires cohomology with twisted coefficients. Another approach to obtaining a lower bound for the category of $\Pi_p$ is to use the category weight~\cite{CLOT}. Both approaches require substantial work.
\end{remark}

\end{document}